\newcommand{\ff}{{\mathcal F}}
\newcommand{\aaa}{{\mathcal A}}
\newcommand{\G}{{\mathcal G}}
\newcommand{\bb}{{\mathcal B}}
\newtheorem{thm}{Theorem}
\newtheorem{lem}[thm]{Lemma}
\newtheorem{cla}[thm]{Claim}
\newcommand{\eps}{{\varepsilon}}
\newtheorem{cor}[thm]{Corollary}
\date{}
\newtheorem{prop}[thm]{Proposition}
\newcommand{\E}{\mathrm E}
\begin{document}

\begin{frontmatter}[classification=text]

\title{Simple Juntas for Shifted Families} 

\author[peter]{Peter Frankl}
\author[andrey]{Andrey Kupavskii}

\begin{abstract}
We say that a family $\mathcal F$ of $k$-element sets is a {\it $j$-junta} if there is a set $J$ of size $j$ such that, for any $F$, its presence in $\ff$ depends on its intersection with $J$ only. Approximating arbitrary families by $j$-juntas with small $j$ is a recent powerful technique in extremal set theory.

The weak point of all known  junta approximation results is that they work in the range $n>Ck$, where $C$ is an extremely fast growing function of the input parameters, such as the quality of approximation or the number of families we simultaneously approximate.

We say that a family $\ff$ is {\it shifted} if for any $F=\{x_1,\ldots, x_k\}\in \ff$ and any $G =\{y_1,\ldots, y_k\}$ such that $y_i\le x_i$, we have $G\in \ff$. For many extremal set theory problems, including the Erd\H os Matching Conjecture, or the Complete $t$-Intersection Theorem, it is sufficient to deal with shifted families only.

  In this paper, we present very general approximation by juntas results for shifted families with explicit (and essentially linear) dependency on the input parameters. The results are best possible up to some constant factors. Moreover, they give meaningful statements for almost all range of values of $n$. The proofs are shorter than the proofs of the previous approximation by juntas results and are completely self-contained.

  As an application of our junta approximation, we give a nearly-linear bound for the multi-family version of the Erd\H os Matching Conjecture. More precisely, we prove the following result. Let $n\ge 12sk\log(e^2s)$ and suppose that the families $\ff_1,\ldots, \ff_s\subset {[n]\choose k}$ do not contain $F_1\in\ff_1,\ldots, F_s\in \ff_s$ such that $F_i$'s are pairwise disjoint. Then $\min_{i}|\ff_i|\le {n\choose k}-{n-s+1\choose k}.$
\end{abstract}
\end{frontmatter}


\section{Introduction}

Let $[n]:=\{1,\ldots, n\}$. We use the standard notation $2^{[n]}$ for the power set of $[n]$ and ${[n]\choose k}$ for the family of all $k$-element subsets of $[n]$. We say that a family $\mathcal J\subset 2^{[n]}$ is a {\it $j$-junta}, if there exists a set $J\in {[n]\choose j}$ and a family $\mathcal J^*\subset 2^{J}$ such that $\mathcal J = \{F\subset [n]: F\cap J\in\mathcal J^*\}$. We call $J$ as above the {\it center} of the junta and $\mathcal J^*$ the {\it defining family}.

We say that $\mathcal F\subset 2^{[n]}$ is {\it $t$-intersecting} if $|F_1\cap F_2|\ge t$ for any two $F_1,F_2\in \ff$. We say ``intersecting'' instead of ``$1$-intersecting'' for shorthand. Similarly, if for any $A\in\aaa\subset 2^{[n]}$ and $B\in \bb\subset 2^{[n]}$ we have $|A\cap B|\ge t$, then we say that $\aaa$ and $\bb$ are cross $t$-intersecting. We say ``cross-intersecting'' instead of ``cross $1$-intersecting'' for shorthand.
In a seminal paper \cite{DF}, Dinur and Friedgut proved the following theorem, which, informally speaking, states that the structure of large intersecting families is governed by a small number of coordinates.

\begin{thm}[\cite{DF}]\label{thmdf} There exist functions $j(r),c(r)$ such that for any integers $1<j(r)<k<n/2$, if $\aaa\subset {[n]\choose k}$ is an intersecting family then there exists an intersecting $j$-junta $\mathcal J$ with $j\le j(r)$ and
$$|\mathcal F\setminus \mathcal J|\le c(r)\cdot {n-r\choose k-r}.$$
\end{thm}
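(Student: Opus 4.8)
The natural strategy is to build the center $J$ of the junta by an iterative ``cleaning'' procedure: repeatedly identify a small set of coordinates that controls a large part of $\aaa$, add it to $J$, pass to the restrictions of $\aaa$ with respect to $J$, and recurse; stop once everything left uncontrolled has already shrunk below the permitted error $c(r){n-r\choose k-r}$. Think of $r$ as fixed, and of $j(r)$, $c(r)$ --- and of the hypothesis $n\ge C(r)k$ that the argument will in fact need --- as being calibrated at the very end. The two enabling facts are: (a) a ``spread $\Rightarrow$ small'' bound (Frankl): an intersecting $\ccc\subseteq{[m]\choose\ell}$ with covering number $\ge\tau$ has $|\ccc|\le D(\tau){m-\tau\choose\ell-\tau}$ once $m\ge D'(\tau)\ell$, and, dually, the family of $\ell'$-element transversals of any nonempty family is a vanishing fraction of ${m\choose\ell'}$, the more so the larger the covering number; and (b) the structural observation that, writing $\aaa_T=\{A\setminus J:A\in\aaa,\ A\cap J=T\}$, the family $\aaa_\emptyset$ is again intersecting and, for disjoint $T,T'$, the families $\aaa_T,\aaa_{T'}$ are cross-intersecting (so the bounds of (a) and their cross-intersecting analogues propagate down the recursion).

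Here is the plan in more detail. While some $\aaa_T$ with $|T|$ below a cutoff $\sim r$ is ``heavy'' (has at least $N$ members) and has covering number below a threshold $\tau=\tau(r)$, add to $J$ a minimum transversal of it (of size $<\tau$) and update the $\aaa_T$'s. When no such move remains, put $\mathcal J^*=\{T\subseteq J:\aaa_T\text{ heavy}\}$ and $\mathcal J=\{F\in{[n]\choose k}:F\cap J\in\mathcal J^*\}$. One then verifies three things. \emph{$J$ is bounded in terms of $r$}: since we never process a $T$ with $|T|\ge r$, each move deepens the support of some portion of the heavy mass and branches it into at most $2^\tau$ pieces, so the recursion is a tree of depth $<r$ and branching $<2^\tau$, giving $|J|<\tau 2^{r\tau}=:j(r)$. \emph{The error is small}: $\aaa\setminus\mathcal J$ is the union of the light $\aaa_T$'s (at most $2^{|J|}$ of them, each of size $<N$) and of the sets meeting $J$ in $\ge r$ points (at most ${j(r)\choose r}{n-r\choose k-r}$ of them), so with $N\asymp c(r)2^{-|J|}{n-r\choose k-r}$ this is $\le c(r){n-r\choose k-r}$. \emph{$\mathcal J$ is intersecting}: it suffices that $\mathcal J^*$ is, i.e.\ that no two heavy $T,T'$ are disjoint; but disjoint heavy $T,T'$ would make $\aaa_T,\aaa_{T'}$ cross-intersecting, and then each is contained in the transversals of the other, which by fact (a) forces one of them to be too small to be heavy once $n\ge C(r)k$ (in particular $\emptyset\notin\mathcal J^*$ because $\aaa_\emptyset$, being intersecting and heavy, would have bounded covering number, hence would have been processed). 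One can equally run this as an induction on $r$, improving the error of the level-$(r-1)$ junta from scale ${n-r+1\choose k-r+1}$ to scale ${n-r\choose k-r}$ by one further round of cleaning.

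The crux --- and the reason the statement leaves $j(r),c(r)$ unspecified --- is the numerology that makes the three verifications mutually compatible: the branching of the cleaning forces $j(r)$ to be tower-like in $r$; this forces the heaviness threshold $N$ to be exponentially small in $j(r)$; and this, in order for the transversal bound in (a) to still beat $N$ (so that ``$\mathcal J^*$ is intersecting'' survives) and for Frankl's bound to propagate through $r$ rounds of restriction, forces $n/k$ to exceed a tower-type function $C(r)$. Orchestrating this cascade of thresholds is the whole technical difficulty, and it is exactly the blow-up in $C(r)$ that the present paper removes for shifted families, where a controlling set of coordinates sits explicitly inside $[2k]$ and can be produced directly, with no recursion at all. (The general statement does not reduce to the shifted one: shifting preserves being intersecting and $|\aaa|$, but the reverse shifts do not transport a junta for the shifted family back to $\aaa$.)
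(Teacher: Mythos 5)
First, a framing point: this paper does not prove Theorem~\ref{thmdf}. It is quoted from Dinur and Friedgut~\cite{DF} (and, as the text notes, follows from their cross-intersecting Theorem~\ref{thmdf2}); it appears here only as motivation for the shifted analogues, beginning with Theorem~\ref{thmshiftjuntas}, that the paper actually proves. So there is no ``paper's proof'' to match your sketch against. What one can compare it with is, on the one hand, the actual argument of~\cite{DF}, which is Fourier-analytic --- it runs through the $p$-biased measure and Friedgut's junta theorem, exactly the machinery the paper's footnote refers to --- and, on the other hand, the paper's proof of the shifted version. Your proposal matches neither: it is a purely combinatorial recursive ``cleaning'' scheme driven by covering numbers, closer in spirit to the later junta method of Keller and Lifshitz~\cite{KLchv} than to~\cite{DF}, and quite unlike the paper's non-recursive construction for shifted families, where the center is $[j]$ outright, the defining family is $\{J\subset[j]:|\aaa(J,[j])|>{n-j\choose a-r}\}$, and correctness is verified in a single step from Lemma~\ref{lemshift} together with Lemma~\ref{lemcross}.

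As a sketch of a combinatorial route to Theorem~\ref{thmdf} your plan is reasonable in outline, but it has a genuine soft spot beyond ``the numerology is hard.'' To conclude that $\mathcal J^*$ is intersecting you invoke a dual form of fact (a): every member of $\aaa_T$ is a transversal of $\aaa_{T'}$, and a family of covering number $\ge\tau$ has few $\ell$-element transversals. The only generic bound of this kind, however, is the greedy branching count: for a $(\le b)$-uniform family of covering number $\ge\tau$ on a ground set of size $m$, the number of $\ell$-element transversals is at most $b^{\tau}{m\choose \ell-\tau}$, a saving of order $(b\ell/m)^{\tau}\approx (k^{2}/n)^{\tau}$ over ${m\choose\ell}$ in your application. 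That beats the heaviness threshold only when $n\gg k^{2}$, whereas Theorem~\ref{thmdf} is asserted for all $k<n/2$ and its conclusion is already non-vacuous once $n>c(r)^{1/r}k$; so a whole regime $C(r)k<n<k^{2}$ is left uncovered. (Also, as stated, ``the transversals of any nonempty family are a vanishing fraction'' is false without a uniformity hypothesis --- take $\bb=\{[m]\}$.) Closing this gap requires replacing the raw transversal count by a sharper cross-intersecting or spread estimate, or circumventing it entirely, which is precisely where~\cite{DF} lean on Fourier analysis. Your closing remark --- that the general theorem does not reduce to the shifted one because reverse shifts do not transport a junta back --- is correct, and it is exactly why the paper confines itself to shifted families.
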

This result is, in fact, a corollary of the analogous statement concerning cross-intersecting families.
\begin{thm}[\cite{DF}]\label{thmdf2} There exist functions $j(r),c(r)\ge 1$ such that the following holds. Let $n,a,b\in \mathbb N$ with $1<j(r)<b\le a$ and $a+b<n$ and let $\aaa\subset {[n]\choose a},\ \bb\subset {[n]\choose b}$ be cross-intersecting.
Then there exist cross-intersecting $j$-juntas $\mathcal J$ and $\mathcal I$ with $j\le j(r)$, such that
$$|\mathcal A\setminus \mathcal J|\le c(r)\cdot {n-r\choose a-r}\ \ \ \text{and} \ \ \ |\mathcal B\setminus \mathcal J|\le c(r)\cdot {n-r\choose b-r}.$$
\end{thm}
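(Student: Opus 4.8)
The plan is to follow the spectral/density blueprint of Dinur and Friedgut: pass to the $p$-biased cube, isolate a pseudorandomness dichotomy, and then build the center of the junta one small block of coordinates at a time. First one disposes of the degenerate cases. If $|\aaa|\le c(r)\binom{n-r}{a-r}$, take $\mathcal J=\emptyset$ and $\mathcal I=2^{[n]}$: these are vacuously cross-intersecting, $|\aaa\setminus\mathcal J|=|\aaa|$ is within budget, and $|\bb\setminus\mathcal I|=0$; symmetrically if $|\bb|$ is small. So assume from now on that both families are ``large'', and the task is to locate a bounded set of influential coordinates.

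For a set $S$ and a family $\ff\subseteq\binom{[n]}{m}$ write $\ff_S=\{F\setminus S:S\subseteq F\in\ff\}$, and call $\ff$ \emph{$(\delta,t)$-spread} if $|\ff_S|\le\delta\binom{n-|S|}{m-|S|}$ for every nonempty $S$ with $|S|\le t$ --- no small set lies in a disproportionate part of $\ff$. The core input is the assertion that there are $\delta(r)>0$ and $t(r)$ such that a large $(\delta(r),t(r))$-spread family $\aaa\subseteq\binom{[n]}{a}$ is \emph{pseudorandom for disjointness}, meaning that the number of pairs $(A,B)$ with $A\in\aaa$ and $B$ a $b$-subset of $[n]\setminus A$ is $\Omega_r\bigl(|\aaa|\binom{n-a}{b}\bigr)$; in particular no large $\bb\subseteq\binom{[n]}{b}$ can be cross-intersecting with such an $\aaa$. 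I would derive this from hypercontractivity (Bonami--Beckner) applied to $\mathbf 1_\aaa$ in the $\mu_{a/n}$-biased cube, or, to keep the proof self-contained, from a Kruskal--Katona / shadow-type lower bound on the ``disjointness shadow'' of a spread family.

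Granting the dichotomy, run the following conditioning process on the large cross-intersecting pair $\aaa,\bb$, maintaining a set $J$ (initially empty): if, for every trace $T\subseteq J$, the subfamily $\aaa|_{F\cap J=T}$ (regarded off $J$) is either small or $(\delta(r),t(r))$-spread, and likewise for $\bb$, then stop; otherwise some $S$ disjoint from $J$ with $|S|\le t(r)$ witnesses a failure of spreadness for one of these subfamilies, and we adjoin $S$ to $J$. By a potential/counting argument --- each step confines a definite fraction of the mass of $\aaa$ (or $\bb$) to ever-finer blocks while the total mass stays bounded below --- the process halts after $O_r(1)$ rounds, so $|J|\le j(r)$. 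Now set $\mathcal A^*=\bigl\{T\subseteq J:|\{A\in\aaa:A\cap J=T\}|\ge\binom{n-r}{a-r}\bigr\}$ and $\mathcal B^*$ analogously, and let $\mathcal J,\mathcal I$ be the juntas with center $J$ and defining families $\mathcal A^*,\mathcal B^*$. Then $|\aaa\setminus\mathcal J|\le 2^{|J|}\binom{n-r}{a-r}\le c(r)\binom{n-r}{a-r}$ with $c(r):=2^{j(r)}$ (and symmetrically for $\bb$); and $\mathcal J,\mathcal I$ are cross-intersecting, because a disjoint pair $T\in\mathcal A^*$, $T'\in\mathcal B^*$ would give subfamilies $\aaa|_{F\cap J=T}$ and $\bb|_{F\cap J=T'}$ that are both large and spread off $J$, so the pseudorandomness bound would produce $A\in\aaa$, $B\in\bb$ with $A\cap B=\emptyset$, contradicting cross-intersection of the originals. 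Finally, each of $\mathcal J,\mathcal I$ depends only on the trace on $J$, so it is a genuine $j$-junta with $j\le j(r)$.

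The single genuinely hard ingredient is the pseudorandomness dichotomy --- that a large, spread family has many disjoint partners --- together with the termination count of the conditioning process; this is exactly where hypercontractivity (or a combinatorial surrogate) is needed, and where the very fast growth of $j(r)$ and $c(r)$ originates. A further subtlety is making the scheme uniform across the range $1<j(r)<b\le a$, $a+b<n$: when $a$ is close to $n/2$ the biased measure $\mu_{a/n}$ is far from the uniform measure, so the translations among $|\aaa|$, $\mu_{a/n}(\aaa)$, and the threshold $\binom{n-r}{a-r}$ must be handled with care.
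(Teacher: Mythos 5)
The statement you are proving is Theorem~\ref{thmdf2}, which the paper \emph{quotes} from Dinur and Friedgut \cite{DF}; the paper does not give its own proof of it. What the paper actually proves is a strengthened analogue for \emph{shifted} families (Theorem~\ref{thmshiftjuntas}), by a completely elementary route that avoids hypercontractivity and spreadness altogether. So there is no in-paper proof of Theorem~\ref{thmdf2} to compare against; the relevant comparison is with the paper's shifted variant.

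Your sketch faithfully reconstructs the Dinur--Friedgut blueprint: a pseudorandomness (spreadness) dichotomy, proved via hypercontractivity in the biased cube, plus a conditioning process that iteratively enlarges the junta center and halts in $O_r(1)$ rounds. That is the right high-level decomposition for the general (unshifted) case. However, the two hardest ingredients are asserted rather than proved: (a) the dichotomy itself---that a large $(\delta(r),t(r))$-spread family $\aaa\subset\binom{[n]}{a}$ has $\Omega_r\bigl(|\aaa|\binom{n-a}{b}\bigr)$ disjoint partners of size $b$---and (b) the bounded-round termination of the conditioning process. You flag (a), but (b) is equally delicate: ``each step confines a definite fraction of the mass to ever-finer blocks'' is a potential-function argument whose potential and decrement you have not specified, and controlling it without the center exploding is precisely where the fast growth of $j(r)$ and $c(r)$ enters. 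As written, the proposal is a plausible roadmap for the Dinur--Friedgut argument rather than a proof.

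The contrast with the paper's method is instructive and is really the point of the paper. For shifted families, Theorem~\ref{thmshiftjuntas} sidesteps both (a) and (b). It fixes $j=2r-t-1$ once and for all, defines the juntas by a single trace-size threshold (exactly as in your final step), and then verifies cross-intersection of the juntas using only two elementary lemmas: Lemma~\ref{lemshift}, stating that the traces $\aaa(X,[s])$, $\bb(Y,[s])$ of shifted cross-$t$-intersecting families are cross-$(t+s-|X|-|Y|)$-intersecting, and Lemma~\ref{lemcross}, a random-walk/reflection-principle estimate forcing one of two cross-$t'$-intersecting families to be small. Shiftedness replaces hypercontractivity, and a one-shot linear-in-$r$ choice of $j$ replaces the iterative conditioning, yielding explicit near-optimal constants and a self-contained proof. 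The price is that the argument applies only to shifted families, whereas your Fourier route covers arbitrary cross-intersecting families at the cost of astronomically large $j(r),c(r)$.
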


Clearly, these results are meaningful only for $n>Ck$ and $n>Ca$, respectively, where $C=C(r)$ is sufficiently large (otherwise, say, in the first result, we may have $C(r){n-r\choose k-r}\ge {n\choose k}$, in which case the displayed inequality becomes trivial).

Recently, this result was greatly extended by Keller and Lifshitz \cite{KLchv} to the so-called `Turan problems for expansions'. This, in particular, includes the case of cross-dependent families. We say that $\ff_1,\ldots, \ff_s$ are {\it cross-dependent} if there is no choice $F_1\in\ff_1\ldots, F_s\in \ff_s$ such that $F_i$ are pairwise disjoint.
\begin{thm}[\cite{KLchv}]\label{thmkl} Let $s,r$ be some constants, $k<\frac n{2s}$, and consider cross-dependent families $\ff_1,\ldots, \ff_s\subset {[n]\choose k}$. Then there exist $C= C(s,r)$ and cross-dependent $C$-juntas $\mathcal J_1,\ldots, \mathcal J_s\subset {[n]\choose k}$, such that for each $i\in [s]$
$$|\mathcal F_i\setminus \mathcal J_i|\le C\cdot {n-r\choose k-r}.$$
\end{thm}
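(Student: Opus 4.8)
The plan is to regard Theorem~\ref{thmkl} as the $s$-fold version of Theorem~\ref{thmdf2}. For $s=2$ there is nothing new: $\ff_1,\ff_2$ are cross-dependent exactly when they are cross-intersecting, and (since $2k<n$) Theorem~\ref{thmdf2} with $a=b=k$ already gives the conclusion. For $s\ge3$ the obstruction is that cross-dependence is a ``global'' constraint tying all $s$ families together through a matching, rather than a pairwise correlation, so the hypercontractive input behind Theorem~\ref{thmdf2} does not apply directly. I would therefore proceed in four stages: (i) extract a bounded junta centre so that all residual families become ``spread''; (ii) prove that $s$ spread families of non-negligible density always contain pairwise disjoint members; (iii) deduce from cross-dependence that, patternwise, only a restricted set of residuals can be dense --- this pins down the juntas; (iv) clean up so that the juntas are exactly cross-dependent.

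\emph{Stage (i).} I would run the coordinate-extraction procedure underlying Theorem~\ref{thmdf2}: repeatedly move into a growing centre $J$ a coordinate (or small coordinate set) whose degree in some $\ff_i$ is an unexpectedly large multiple of its density, passing to the associated links and co-links; the density jumps up by a bounded factor at each such move, so boundedly many suffice and $|J|\le j(r)$. For each pattern $T\subseteq J$ and each $i$ this produces a residual family $\G_i^T:=\{F\setminus T:F\in\ff_i,\ F\cap J=T\}\subseteq\binom{[n]\setminus J}{\,k-|T|\,}$ which is either \emph{spread} (no bounded-size coordinate set has unexpectedly large relative degree) or of density below a threshold $\tau=\tau(s,r)$. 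This is the only step that uses $n\ge Ck$: spreadness, and the matching argument below, need $n-|J|$ to be comfortably larger than $sk$.

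\emph{Stages (ii)--(iii).} Suppose $T_1,\dots,T_s\subseteq J$ are pairwise disjoint patterns with every $\G_i^{T_i}$ spread of density $\ge\tau$. Split $[n]\setminus J$ uniformly at random into $s$ blocks of size $\approx(n-|J|)/s>2k$. Spreadness forces $\G_i^{T_i}$ to contain a $(k-|T_i|)$-subset of the $i$-th block with probability bounded below by a constant, and because fixing one block restricts only boundedly many coordinates for the others, these events co-occur with positive probability; the resulting sets $G_1,\dots,G_s$ together with the $T_i$ give pairwise disjoint members of $\ff_1,\dots,\ff_s$, contradicting cross-dependence. Hence the pattern families $\mathcal J_i^{*}:=\{T\subseteq J:\ \G_i^T\text{ is spread and has density}\ge\tau\}$ admit no pairwise disjoint transversal, and we take $\mathcal J_i$ to be the junta with centre $J$ and defining family $\mathcal J_i^{*}$.

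I expect stage (iv) and the quantitative bookkeeping in (iii) to be the real difficulty. For a pattern $T\notin\mathcal J_i^{*}$ of size $\ge r$ the residual contributes at most $\tau\binom{n-|J|}{k-|T|}\le\tau\binom{n-r}{k-r}$ to $|\ff_i\setminus\mathcal J_i|$, which is acceptable; but patterns of size $<r$ with small-yet-not-tiny residuals are not controlled this way, so the degree thresholds and the extraction in stage (i) must actually be run \emph{at level $r$} --- ``dense'' meaning ``still of density $\ge\tau$ after restricting to any $r$ further coordinates'' --- and interleaved carefully with the termination of the extraction. Making this work, together with the transference between the $\mu_p$-measure (with $p\approx k/n$) and the slice that every junta theorem of this kind relies on, is exactly what inflates the constant $C(s,r)$ to the astronomical size the introduction warns about. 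Finally, to pass from ``no pairwise disjoint transversal of patterns'' to ``$\mathcal J_1,\dots,\mathcal J_s$ genuinely cross-dependent as families of $k$-sets'' one uses that $n$ is large, so any forbidden transversal of the juntas traces back to pairwise disjoint $T_i\in\mathcal J_i^{*}$ completed outside $J$; a bounded number of deletions of patterns of size $\ge r$ (each costing $O_{s,r}(\binom{n-r}{k-r})$) removes the remaining obstructions.
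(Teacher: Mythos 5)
First, a meta-observation: the paper does \emph{not} prove Theorem~\ref{thmkl}. It is cited from Keller--Lifshitz \cite{KLchv} as motivating background, precisely because the paper's point is to replace the kind of argument you are sketching with a different and far more quantitative one, valid for shifted families. The paper's counterpart for shifted families is Theorem~\ref{thmcrossunion}, proved via the general Theorem~\ref{thmshiftjunta1}, and its mechanism is entirely combinatorial: shiftedness forces every cross-union tuple to ``hit a line'' $\sum_i |F_i\cap[\ell]|\ge \ell+q$ at some $\ell$ (Proposition~\ref{propsumzero}); one takes as junta the sets that hit the line early ($\ell\le j$); and the error $|\ff_i\setminus\mathcal J_i|$ is a hypergeometric/binomial tail for a walk hitting the line late, controlled by a Chernoff bound. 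No spreadness, no random restrictions, no hypercontractivity, and explicit linear-in-$s$ constants. Your sketch is instead the Dinur--Friedgut / Keller--Lifshitz route, which is a genuinely different approach and is exactly the one the paper sets out to avoid.

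Within your sketch there is a concrete gap in stages (ii)--(iii). You claim the events ``$\G_i^{T_i}$ contains a $(k-|T_i|)$-set inside block $B_i$'' co-occur with positive probability ``because fixing one block restricts only boundedly many coordinates for the others.'' That is not so: fixing one block removes about $(n-|J|)/s$ coordinates from the remaining pool, so the events are strongly coupled through the partition. The argument has to show that each individual event holds with probability exceeding $1-1/s$ over the random partition and then union bound; and establishing that a $\tau$-dense spread family, restricted to a random $\tfrac1s$-fraction of the ground set, is nonempty with probability $1-o(1/s)$ is precisely the hypercontractive / sharp-threshold input of \cite{KLchv}. You invoke it as background but do not supply it, and without it the matching is not constructed. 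Secondarily, ``probability bounded below by a constant'' is too weak even to start a union bound over $s$ blocks. The remaining stages (extraction, pattern analysis, clean-up) are a plausible outline of the known argument, and you correctly identify that the bookkeeping is what inflates $C(s,r)$; but that inflation is not incidental --- it is exactly what the paper's shifted-families approach is built to avoid.
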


Again, the theorem only makes sense for $n>C'(s,r) k$, and the dependence of $C'$ on $s,r$ is not explicit (and at least exponential in $s,r$). In many cases, it is desirable to have a control of the behaviour of $C'(s,r)$. In particular, this is the case for the Erd\H os Matching Conjecture (cf. \cite{E}, \cite{F4},\cite{FK16}). We also note that, in some related settings, the exponential dependence cannot be improved. This is, e.g., the case of Friedgut's junta theorem \cite{Fri1}.

We say that the family $\ff$ is {\it shifted} if for any $F=\{x_1,\ldots, x_k\}\in \ff$ and any $G =\{y_1,\ldots, y_k\}$ such that $y_i\le x_i$, we have $G\in \ff$. One makes the family shifted by performing shifts (see definition in Section~\ref{sec32}). We refer to the survey of the first author \cite{Fra3}. Shifting is a very useful combinatorial operation, which preserves many properties of a family. In particular, it preserves the size of each set and the family. Moreover, it preserves the property of being (cross-) $t$-intersecting, cross-dependent etc. Thus, for many extremal problems (including the Erd\H os Matching Conjecture, Ahlswede--Khachatrian  theorem \cite{AK} and their common generalization \cite{FK20}), it is sufficient to restrict oneself to shifted families.



The purpose of this note is to show that one can obtain junta-type results for shifted families with essentially best possible dependencies on the parameters (see the remark after Theorem~\ref{thmshiftjunta1}) with purely combinatorial techniques.\footnote{All previous junta-type theorems rely on results from discrete Fourier analysis.} In the next section, we illustrate our ideas by giving a simple proof of a stronger version of Theorem~\ref{thmdf2} for shifted families. In Section~\ref{sec3}, we give a general junta approximation-type statement and deduce several of its combinatorial implications, including a  stronger version of Theorem~\ref{thmkl} for shifted families. In Section~\ref{sec4}, we present the proofs of the junta approximation results. In Section~\ref{sec5}, we apply these results to obtain the result on the multi-family version of the Erd\H os Matching Conjecture mentioned in the abstract.
\section{Cross $t$-intersecting families}
Let us first illustrate our methods in the setting of Theorem~\ref{thmdf2}.

\begin{thm}\label{thmshiftjuntas} Fix integers $a\ge b\ge r> t>0$ and shifted cross $t$-intersecting families $\aaa\subset {[n]\choose a}$ and $\bb\subset {[n]\choose b}$. Put $j:=2r-t-1$. If $n\ge 2a$  then there exist cross $t$-intersecting $j$-juntas $\mathcal J$  and $\mathcal I$ with center $[j]$ such that
\begin{equation}\label{firstjunta}|\mathcal A\setminus \mathcal J|\le 2^j\cdot {n-j\choose a-r}\ \ \ \ \text{and} \ \ \ \ |\mathcal B\setminus \mathcal I|\le 2^j\cdot {n-j\choose b-r}.\end{equation}
\end{thm}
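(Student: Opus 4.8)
The plan is to exploit the shiftedness of $\aaa$ and $\bb$ to show that, apart from a small number of "exceptional" sets, membership in either family is controlled by the first $j=2r-t-1$ coordinates. First I would recall the standard consequence of shiftedness: if a set $F$ with $F\cap[j]$ fixed does \emph{not} lie in $\aaa$, then every set obtained from $F$ by decreasing some coordinate inside $[j]$ or moving mass into $[j]$ also fails to lie in $\aaa$; dually, if $F\in\aaa$ then shifting it "downward" keeps it in $\aaa$. So the natural candidate for the defining family $\mathcal J^*\subset 2^{[j]}$ is
\[
\mathcal J^*:=\Bigl\{\,S\subseteq[j]\;:\;\text{the "minimal" }a\text{-set with trace }S\text{ on }[j]\text{ lies in }\aaa\,\Bigr\},
\]
i.e.\ $S\in\mathcal J^*$ iff $S\cup\{j+1,j+2,\dots\}$ (filled up to size $a$) belongs to $\aaa$; define $\mathcal I^*$ analogously for $\bb$. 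Shiftedness makes this well-defined in the sense that $\mathcal J:=\{F:F\cap[j]\in\mathcal J^*\}$ contains $\aaa$ up to sets whose trace on $[j]$ is "large", and symmetrically $\mathcal J$ minus those sets is contained in $\aaa$.

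The next step is to bound $|\aaa\setminus\mathcal J|$. A set $F\in\aaa\setminus\mathcal J$ has $F\cap[j]=S\notin\mathcal J^*$; by the minimality remark above, $F$ must differ from the minimal $a$-set with trace $S$, which forces $F$ to contain at least one element of $[j]$ "in excess" — more precisely, $F$ can be charged to a pair $(S,x)$ with $S=F\cap[j]$, $|S|\ge r$ (here is where the numerology $j=2r-t-1$ enters, via the cross $t$-intersection condition forcing $|A\cap B|\ge t$ on $[j]$ for $A\in\mathcal J$, $B\in\mathcal I$), and the remaining $a-|S|\le a-r$ elements lie outside $[j]$. Summing over the at most $2^j$ choices of $S\subseteq[j]$ and the at most ${n-j\choose a-|S|}\le{n-j\choose a-r}$ completions gives the bound $2^j{n-j\choose a-r}$, and the same computation with $b$ in place of $a$ gives the bound for $\bb\setminus\mathcal I$. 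The condition $n\ge 2a$ is used to guarantee ${n-j\choose a-r}$ is the dominant binomial among ${n-j\choose a-i}$ for $i\ge r$ (so that the crude bound by $2^j$ times the single term is legitimate).

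Finally I would verify that $\mathcal J$ and $\mathcal I$ are themselves cross $t$-intersecting. Given $A\in\mathcal J$, $B\in\mathcal I$, their traces satisfy $A\cap[j]\in\mathcal J^*$ and $B\cap[j]\in\mathcal I^*$; I must show $|A\cap[j]\cap B\cap[j]|\ge t$. Suppose not: then two sets $S\in\mathcal J^*$, $T\in\mathcal I^*$ with $|S\cap T|<t$ would produce, via the "minimal representatives" $\tilde S\in\aaa$ and $\tilde T\in\bb$, two sets with intersection of size $|S\cap T|$ plus whatever overlap occurs outside $[j]$; since $|[j]\setminus S|+|[j]\setminus T| = 2j-|S|-|T|$ and $S,T$ are the traces, one chooses the minimal representatives to be disjoint outside $[j]$ whenever $n$ is large enough (again $n\ge 2a$ suffices, as $a+b\le 2a\le n$), contradicting cross $t$-intersection of $\aaa,\bb$. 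Unwinding this shows $j=2r-t-1$ is exactly the threshold making the charging argument and the cross-intersection check compatible.

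\medskip
\noindent\textbf{Main obstacle.} The delicate point is the bookkeeping in the charging step: one has to argue that \emph{every} $F\in\aaa\setminus\mathcal J$ genuinely "wastes" at least $|S|-(a-\text{(space outside)})$ worth of elements inside $[j]$, i.e.\ that the map $F\mapsto(F\cap[j],\,F\setminus[j])$ is injective on $\aaa\setminus\mathcal J$ with the claimed range, and simultaneously that the minimal representatives used to define $\mathcal J^*,\mathcal I^*$ are legitimate members of $\aaa,\bb$ (which is where shiftedness and $n\ge 2a$ are both needed). Getting the index $r$ to appear — rather than $t$ or $j$ — in the exponent of the binomial is the crux, and it is precisely the place where the hypothesis $r>t$ and the definition $j=2r-t-1$ must be used in tandem.
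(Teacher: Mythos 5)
Your definition of $\mathcal J^*$ via minimal representatives ($S\in\mathcal J^*$ iff $S\cup\{j+1,\dots,j+a-|S|\}\in\aaa$) creates a fatal gap. First, with this definition $\aaa\subset\mathcal J$ automatically: if $F\in\aaa$ has trace $S$ on $[j]$, shifting every element of $F\setminus[j]$ down to $\{j+1,\dots,j+a-|S|\}$ keeps the set in the shifted family $\aaa$, so $S\in\mathcal J^*$. Hence $\aaa\setminus\mathcal J=\emptyset$ and the charging argument (and the claim that $|S|\ge r$) is vacuous. Second, and more importantly, a single witness with trace $S$ in $\aaa$ and a single witness with trace $T$ in $\bb$ cannot yield a contradiction when $|S\cap T|<t$: both your minimal representatives fill up with the smallest elements of $[j+1,n]$, so they overlap heavily outside $[j]$, and shiftedness only permits moving coordinates \emph{down}, not up, so you cannot ``choose the minimal representatives to be disjoint outside $[j]$'' as you assert. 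Concretely, if $\{j+1,\dots,j+a\}\in\aaa$ and $\{j+1,\dots,j+b\}\in\bb$ (consistent with cross $t$-intersection since $b\ge t$), then $\emptyset\in\mathcal J^*\cap\mathcal I^*$, so $\mathcal J$ contains $\{j+1,\dots,j+a\}$ while $\mathcal I$ contains the disjoint set $\{j+a+1,\dots,j+a+b\}$, and $\mathcal J,\mathcal I$ are not cross $t$-intersecting.

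The paper avoids both problems by defining $\mathcal J^*$ by a \emph{counting threshold}: $S\in\mathcal J^*$ iff $|\aaa(S,[j])|>\binom{n-j}{a-r}$, and similarly for $\mathcal I^*$. The size bound $|\aaa\setminus\mathcal J|\le 2^j\binom{n-j}{a-r}$ is then immediate (at most $2^j$ traces fail the threshold). For cross-intersection one combines Lemma~\ref{lemshift} (if $|X\cap Y|\le t-1$, $X,Y\subset[j]$, then $\aaa(X,[j])$ and $\bb(Y,[j])$ are cross $(t+j-|X|-|Y|)$-intersecting) with Lemma~\ref{lemcross} (a reflection/random-walk bound for cross $t'$-intersecting families): if both $|\aaa(X,[j])|$ and $|\bb(Y,[j])|$ exceeded their thresholds, the two families would be cross $(2r-1-|X|-|Y|)$-intersecting yet both large, contradicting Lemma~\ref{lemcross}. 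This is exactly the step that your single-witness definition cannot support.
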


For a family $\ff\subset 2^{[n]}$ and sets $X\subset S\subset [n]$ we use the following notation:
$$\ff(X,S):=\big\{F\setminus X\ :\ F\in\ff,\ F\cap S = X\big\}.$$
Note that $\ff(X,S)$ is regarded as a subfamily of $2^{[n]\setminus S}$.

In the proof of this theorem, we will use the following two lemmas. The first lemma was proved by the first author (cf., e.g., Proposition 9.3 in \cite{Fra3}).
\begin{lem}\label{lemshift} Assume that $\aaa,\bb$ are cross $t$-intersecting and shifted. Fix $X,Y, s$, such that $X,Y\subset [s]$ and $|X\cap Y|\le t-1$. Then $\aaa(X,[s])$ and $\bb(Y,[s])$ are cross $(t+s-|X|-|Y|)$-intersecting.
\end{lem}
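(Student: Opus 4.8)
The plan is to reduce everything to a single shifting computation on one pair of sets. Take arbitrary $A'\in\aaa(X,[s])$ and $B'\in\bb(Y,[s])$ and put $A:=X\cup A'\in\aaa$ and $B:=Y\cup B'\in\bb$. By the definition of $\aaa(X,[s])$ we have $A\cap[s]=X$, hence $A'\subseteq\{s+1,\dots,n\}$; likewise $B'\subseteq\{s+1,\dots,n\}$, and $A\cap B=(X\cap Y)\cup(A'\cap B')$ with the two pieces disjoint. Setting $Z:=[s]\setminus(X\cup Y)$, so that $|Z|=s-|X|-|Y|+|X\cap Y|$, the inequality we want, $|A'\cap B'|\ge t+s-|X|-|Y|$, is exactly $|A\cap B|\ge t+|Z|$. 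The point of $Z$ is that none of its elements belongs to $A$ or to $B$ (because $A\cap[s]=X$ and $B\cap[s]=Y$), so these are free common ``low'' coordinates.

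Next I would use shiftedness to slide common elements of $A$ down into $Z$, but only inside $A$, leaving $B$ fixed. Let $p:=|A'\cap B'|$ and $\ell:=\min(p,|Z|)$, choose distinct $v_1,\dots,v_\ell\in A'\cap B'$ and distinct $z_1,\dots,z_\ell\in Z$, and let $\hat A$ be obtained from $A$ by performing the replacements $v_i\mapsto z_i$ one after another. Each replacement is a single coordinate decrease: $z_i\le s<v_i$ (since $v_i\in A'\subseteq\{s+1,\dots,n\}$), the $z_i$ are distinct and lie in $[s]\setminus(X\cup Y)$ so they never belong to the current set, and the $v_i$ are distinct elements of $A'\subseteq A$. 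Since $\aaa$ is shifted, the set stays in $\aaa$ after each step, so $\hat A\in\aaa$. Because $A'$ is disjoint from $Y\subseteq[s]$ and $Z$ is disjoint from $B$, a direct computation gives $\hat A\cap B=(X\cap Y)\cup\big((A'\cap B')\setminus\{v_1,\dots,v_\ell\}\big)$, a disjoint union of size $|X\cap Y|+(p-\ell)$.

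Finally I would invoke the hypotheses. Cross $t$-intersectingness of $\aaa,\bb$ gives $|\hat A\cap B|\ge t$, i.e. $|X\cap Y|+(p-\ell)\ge t$; since $|X\cap Y|\le t-1$ this forces $p-\ell\ge 1$, so $\ell<p$, hence $\ell=|Z|$. Plugging this back in, $p\ge t-|X\cap Y|+|Z|=t+s-|X|-|Y|$, which is the claim. (If one of $\aaa(X,[s])$, $\bb(Y,[s])$ is empty there is nothing to prove.)

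The only part that needs genuine care is the middle step: checking that each replacement $v_i\mapsto z_i$ is still a legal coordinate decrease \emph{after} the previous ones have been carried out --- that is, that at each stage $v_i$ still lies in the current set, $z_i$ still does not, and $z_i<v_i$. All three follow at once from $z_i\le s<v_i$ together with the distinctness of the $v_i$ and of the $z_i$ and the fact that $Z\cap A=\emptyset$, but this is exactly the spot where a careless argument could slip. Everything else is bookkeeping about which elements of $A$ and $B$ lie in $[s]$ and which lie above it.
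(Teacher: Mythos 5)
Your proof is correct. Note that the paper itself does not prove this lemma: it cites it from Frankl's survey \cite{Fra3} (Proposition~9.3), so there is no in-paper argument to compare against. Your argument is a clean, self-contained shifting proof: lift $A',B'$ to $A=X\cup A'\in\aaa$ and $B=Y\cup B'\in\bb$; observe that $Z=[s]\setminus(X\cup Y)$ consists of coordinates in $[s]$ used by neither $A$ nor $B$; slide $\ell=\min(|A'\cap B'|,|Z|)$ shared elements $v_i\in A'\cap B'$ of $A$ down into distinct positions $z_i\in Z$ (each step is a legitimate coordinate decrease, since $z_i\le s<v_i$, the $z_i$ are distinct and never in the current set because $Z\cap A=\emptyset$, and the $v_i$ are distinct); and finally, the cross-$t$-intersecting hypothesis applied to $(\hat A,B)$ forces $|X\cap Y|+(|A'\cap B'|-\ell)\ge t$, which in view of $|X\cap Y|\le t-1$ pins down $\ell=|Z|$ and unwinds to $|A'\cap B'|\ge t+s-|X|-|Y|$. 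Two details worth noting: shifting only in $\aaa$ while leaving $B$ fixed is a good choice that keeps $\hat A\cap B$ easy to compute, and the ``only after the previous shifts'' legality check is the genuinely delicate point, which you address.
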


The second lemma was essentially proven in \cite{F1}. We give its proof for completeness.
\begin{lem}\label{lemcross} Let $\aaa'\subset {[n']\choose a'}$ and $\bb'\subset {[n']\choose b'}$ be cross $t'$-intersecting. If $n'\ge 2\max \{a',b'\}$ then either $|\aaa'|\le {n'\choose a'-t'}$ or $|\bb'|\le {n'\choose b'-t'}$.
\end{lem}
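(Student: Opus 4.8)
The plan is to argue by contradiction: suppose $|\aaa'|>{n'\choose a'-t'}$ and $|\bb'|>{n'\choose b'-t'}$, and produce from this a pair $A\in\aaa'$, $B\in\bb'$ with $|A\cap B|\le t'-1$, contradicting cross $t'$-intersection. The natural tool is a counting / averaging argument over a suitable family of $(a'+b'-t'+1)$-element or $(a'+b')$-element sets, exploiting the hypothesis $n'\ge 2\max\{a',b'\}$; this is exactly where that numerical condition is used. I would first reduce to a cleaner inequality. Note that the bound ${n'\choose a'-t'}$ counts, for a fixed $(a'+b'-t')$-set (or similar), the $a'$-subsets meeting it in at most $a'-t'$ points — i.e. in fewer than $t'$ points of the complementary part. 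Concretely: pick the right ``ground object'' so that an $a'$-set $A$ and a $b'$-set $B$ inside it either intersect in $\ge t'$ elements or witness a failure.

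More precisely, the standard approach (going back to the cross-intersecting bound of \cite{F1}) is: for each pair $(A,B)$ with $A\in\aaa'$, $B\in\bb'$, since $|A\cap B|\ge t'$ we have $|A\cup B|\le a'+b'-t'$. Consider the bipartite incidence between $\aaa'\times\bb'$ and $(a'+b'-t')$-subsets of $[n']$ containing $A\cup B$. Double count. On one side, each pair $(A,B)$ lies in exactly ${n'-|A\cup B|\choose a'+b'-t'-|A\cup B|}\ge 1$ such sets. On the other side, a fixed $(a'+b'-t')$-set $C$ can contain at most (number of $a'$-subsets of $C$) $\times$ (number of $b'$-subsets of $C$ disjoint enough), but more usefully: the number of pairs $(A,B)$ with $A,B\subset C$, $|A|=a'$, $|B|=b'$, $|A\cap B|\ge t'$ is at most ${a'+b'-t'\choose a'}{a'\choose b'-t'}\cdot(\text{small})$ — this route gets messy. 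The cleaner route I expect to use: show directly that if $|\aaa'|>{n'\choose a'-t'}$ then the ``$t'$-shadow-type'' argument forces, for \emph{every} $b'$-set $B$, the existence of $A\in\aaa'$ with $|A\cap B|\ge t'$ only if $\bb'$ is small. Equivalently, consider the complement family or use the Kruskal--Katona / Frankl-type estimate: the family of $a'$-sets meeting a fixed $b'$-set in $<t'$ points has size $\sum_{i<t'}{b'\choose i}{n'-b'\choose a'-i}$, and one checks, using $n'\ge 2b'$, that this is $<{n'\choose a'-t'}$ would be false — so I instead bound the family of $a'$-sets meeting \emph{some} fixed $(a'+b'-t')$-set.

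The key step, and the main obstacle, is the right choice of the auxiliary structure and the verification of the numerical inequality
\[
{n'-t'\choose a'+b'-2t'}\Big/{n'-a'-b'+t'\choose 0}\ \text{vs.}\ {n'\choose a'-t'}{n'\choose b'-t'}\Big/{a'+b'-t'\choose a'},
\]
or whatever its correct form turns out to be, where the hypothesis $n'\ge 2\max\{a',b'\}$ is precisely what makes the binomial estimates go through (it guarantees monotonicity of the relevant ratios of binomials, e.g. ${n'-i\choose a'-i}$ decreasing slowly enough). I would handle it as follows: assume both families are large, take a random $(a'+b'-t')$-set $C$ (uniformly among $(a'+b'-t')$-subsets of $[n']$), and estimate $\E[|\{A\in\aaa': A\subset C\}|]$ and $\E[|\{B\in\bb': B\subset C\}|]$; if both families exceed the stated bounds, these expectations exceed ${a'+b'-t'\choose a'}$ and ${a'+b'-t'\choose b'}$ respectively on average, which by a pigeonhole/inclusion argument inside the $(a'+b'-t')$-set $C$ forces an $A$ and a $B$ inside $C$ with $|A\cup B|=a'+b'-t'$ yet $|A\cap B|<t'$ — impossible when both are contained in a set of size $a'+b'-t'$ actually forces $|A\cap B|\ge t'$, so the contradiction must instead come from $A,B$ \emph{not} both fitting. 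This sign-chasing is the delicate part; I expect the clean statement is obtained by comparing $|\aaa'|\cdot|\bb'|$ to ${n'\choose a'-t'}{n'\choose b'-t'}$ via a single double-counting identity, and the condition $n'\ge 2\max\{a',b'\}$ ensures the dominant term has the right sign.
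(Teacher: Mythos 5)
Your proposal is not a proof; it is a plan with several false starts, and the route you sketch does not lead to the lemma. The paper's argument is of an entirely different nature, and I think it is worth being concrete about why your approach stalls and what the paper actually does.

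First, the genuine gap. The core of your plan is to average over a random $(a'+b'-t')$-set $C$ and try to find $A\in\aaa'$ and $B\in\bb'$ inside $C$ with $|A\cap B|<t'$. But, as you yourself note midway, this is impossible: if $A,B\subset C$ with $|A|=a'$, $|B|=b'$, $|C|=a'+b'-t'$, then $|A\cap B|=|A|+|B|-|A\cup B|\ge a'+b'-(a'+b'-t')=t'$. So the averaging you set up can never yield a violation of cross $t'$-intersection; the contradiction has to come from somewhere else entirely, and you never identify where. The later paragraph about comparing $|\aaa'|\cdot|\bb'|$ to ${n'\choose a'-t'}{n'\choose b'-t'}$ ``via a single double-counting identity'' is a wish, not an argument; no identity is proposed, and the lemma's conclusion is a \emph{disjunction} (one of the two families is small), which is weaker than a product bound and not naturally obtained by this kind of symmetric double count. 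The hypothesis $n'\ge 2\max\{a',b'\}$ is invoked vaguely (``makes the binomial estimates go through'') without any actual estimate.

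Second, you are missing the structural input the paper relies on. Although the lemma is stated without a shiftedness hypothesis, the paper's proof first (implicitly) reduces to shifted $\aaa',\bb'$ --- legitimate, since shifting preserves cardinalities and the cross $t'$-intersecting property --- and then proves a clean dichotomy: either every $A\in\aaa'$ satisfies $|A\cap[t'+2i]|\ge t'+i$ for some $i\ge 0$ (``property $t'$''), or every $B\in\bb'$ satisfies the analogous property with $t'+1$. This dichotomy is proved in two lines from shiftedness: if some $A\in\aaa'$ fails property $t'$, shiftedness puts $\{1,\dots,t'-1,t'+1,t'+3,\dots\}$ in $\aaa'$, and then cross $t'$-intersection forbids any subset of $\{1,\dots,t',t'+2,t'+4,\dots\}$ from lying in $\bb'$, which by shiftedness again forces property $t'+1$ on all of $\bb'$. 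Finally, the number of $k$-sets in $[n]$ with property $t'$ is computed \emph{exactly} as ${n\choose k-t'}$ via the reflection principle for lattice paths; this is where $n'\ge 2\max\{a',b'\}$ enters, ensuring the walk's endpoint lies on the correct side of the diagonal so the reflection bijection is valid. None of this machinery --- shiftedness reduction, the diagonal-hitting property, the reflection-principle count --- appears in your plan, and I do not see how to complete your averaging scheme to substitute for it.

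In short: your approach, as you partly recognize in the middle of the write-up, cannot produce the required contradiction, and the paper's proof uses a qualitatively different method (reduction to shifted families, a one-line dichotomy from shiftedness, and an exact enumeration via the reflection principle).
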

For the sake of completeness, we shall give its proof.
\begin{proof}[Proof of Lemma~\ref{lemcross}]
The proof consists of two propositions. Let us say that a set $F\subset [n]$ has property $t'$ if there exists some $i\ge 0$ such that
$$|F\cap [t'+2i]|\ge t'+i.$$
\begin{prop}[\cite{F1}]
  The number of sets $F\in {[n]\choose k}$, $k\ge t'$, having property $t'$ is ${n\choose k-t'}.$
\end{prop}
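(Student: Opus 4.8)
The plan is to set up a bijection between the $k$-sets having property $t'$ and the $(k-t')$-subsets of $[n]$. First I would fix a set $F \subset [n]$ with $|F| = k \ge t'$ and try to read off, from $F$, a canonical choice of the index $i$ witnessing property $t'$. The natural candidate is the \emph{smallest} $i \ge 0$ for which $|F \cap [t'+2i]| \ge t'+i$; call this $i(F)$, defined whenever $F$ has property $t'$. For such $F$ one checks that the inequality is tight at $i(F)$, i.e. $|F \cap [t'+2i(F)]| = t'+i(F)$ exactly — otherwise if $|F\cap[t'+2i(F)]| \ge t'+i(F)+1$ then, comparing with the prefix $[t'+2(i(F)-1)]$ which is shorter by $2$ and whose intersection with $F$ is smaller by at most $2$, we would get $|F \cap [t'+2(i(F)-1)]| \ge t'+i(F)-1 = t' + (i(F)-1)$, contradicting minimality (and for $i(F)=0$ the tightness $|F\cap[t']|=t'$ is what minimality gives directly, since any $F$ with $|F\cap[t']|\ge t'$ has $i(F)=0$).

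Next I would use this tightness to define the map. Given $F$ with property $t'$, set $m := t' + 2i(F)$, so $|F \cap [m]| = t' + i(F)$ and hence $|F \setminus [m]| = k - t' - i(F)$; the point is that the portion of $F$ inside the prefix $[m]$ is \emph{not} arbitrary but is pinned down by the minimality condition (it must avoid witnessing property $t'$ at every smaller threshold), so it contributes no free choices beyond what is recorded by $i(F)$ itself. Concretely, I would map $F \mapsto \big(i(F),\, F \setminus [m]\big)$, where $F\setminus[m]$ is a $(k-t'-i(F))$-subset of $[n]\setminus[m] = \{m+1,\dots,n\}$, a set of size $n - t' - 2i(F)$. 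To invert, given a pair $(i, S)$ with $S \subset \{t'+2i+1, \dots, n\}$ of size $k - t' - i$, one reconstructs the prefix part as the unique $(t'+i)$-subset of $[t'+2i]$ that is ``minimally non-witnessing'' — this is forced by a greedy/lexicographic rule, since among subsets of $[t'+2i]$ of size $t'+i$ there is exactly one placement that fails property $t'$ at all thresholds below $t'+2i$.

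The cleanest way to push this through, and the step I expect to be the main obstacle, is proving that this last claim is exact — that for each $i$ the number of ``prefix completions'' is exactly one, equivalently that summing over all valid $i$ reproduces $\binom{n}{k-t'}$. I would handle this by an induction on $n$ (or on $k$): peeling off whether $n \in F$ splits the count according to the recurrence $\binom{n}{k-t'} = \binom{n-1}{k-t'} + \binom{n-1}{k-1-t'}$, and one checks the two sides match after verifying how property $t'$ interacts with removing the top element. Alternatively, a slick route is a direct encoding: property $t'$ for a $k$-set $F$ is equivalent to saying that in the $\pm 1$ walk that steps $+1$ on elements of $F$ and $-1$ on non-elements, restricted to $[n]$, the walk reaches height $t'$; a reflection/ballot-type argument then gives the count $\binom{n}{k-t'}$ directly. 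I would present whichever of these is shortest, most likely the reflection argument, since it bypasses the delicate bookkeeping of the explicit bijection while still being self-contained.
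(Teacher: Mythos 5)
The bijection you sketch does not work as stated: the claim that, for each $i$, there is exactly one $(t'+i)$-subset of $[t'+2i]$ failing property $t'$ at every threshold below $t'+2i$ is false. For a concrete counterexample take $t'=2$, $i=1$: the prefix must be a $3$-subset of $[4]$ with $|F\cap[2]|\le 1$, which forces $\{3,4\}\subset F$ and $|F\cap[2]|=1$, giving the \emph{two} prefixes $\{1,3,4\}$ and $\{2,3,4\}$. In general the number of admissible prefixes for a given $i$ is a ballot/Catalan-type quantity $\tfrac{t'}{t'+2i}\binom{t'+2i}{i}$, so the map $F\mapsto\big(i(F),F\setminus[m]\big)$ is far from injective and no canonical prefix can be reconstructed from $(i,S)$ alone. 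The step you yourself flagged as the main obstacle is exactly where the argument breaks, and the proposed fix by a ``greedy/lexicographic rule'' cannot be repaired without in effect re-deriving the ballot numbers and summing a nontrivial identity.

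The reflection argument you mention at the end, however, is precisely the paper's proof. Encode $F$ as a monotone lattice path from $(0,-t')$ to $(n-k,\,k-t')$, stepping up on elements of $F$ and right otherwise; property $t'$ is then equivalent to the path touching the diagonal $x=y$, and by the reflection principle the number of such paths equals the number of monotone paths from $(-t',0)$ to $(n-k,\,k-t')$, namely $\binom{n}{k-t'}$. If you drop the bijection and present that version, you have the intended argument.
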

\begin{proof}
Interpret each $k$-element set $F$ as a random walk on $\mathbb Z^2$ from point $(0,-t')$ to $(n-k,k-t')$, where at step $i$ we go one up or one right depending on whether the $i$'th element belongs or does not belong to $F$, respectively. Then the sets having property $t'$ are exactly the ones that correspond to random walks that hit the line $x=y$. By reflection principle, the number of such walks is equal to the number of shortest paths from $(-t',0)$ to $(n-k,k-t')$, which is exactly ${n\choose k-t'}$.
\end{proof}
\begin{prop} Suppose that the shifted families $\aaa',\bb'\subset 2^{[n]}$ are cross $t'$-intersecting. Then either all $A\in \aaa'$ have property $t'$ or all $B\in \bb'$ have property $t'+1$.
\end{prop}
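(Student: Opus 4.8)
The plan is to prove the contrapositive by contradiction: assuming that some $A_0\in\aaa'$ lacks property $t'$ \emph{and} some $B_0\in\bb'$ lacks property $t'+1$, I would exhibit explicit sets $A^{*}\in\aaa'$, $B^{*}\in\bb'$ with $|A^{*}\cap B^{*}|\le t'-1$, contradicting the cross $t'$-intersecting hypothesis. Write $A_0=\{x_1<\dots<x_{a'}\}$ and $B_0=\{z_1<\dots<z_{b'}\}$ in increasing order, where $a':=|A_0|$ and $b':=|B_0|$.

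\emph{Step 1 (elementwise lower bounds).} From the failure of property $t'$ for $A_0$, for every $j\ge t'$ apply the definition with $i=j-t'\ge 0$: since $|A_0\cap[t'+2i]|\le t'+i-1$, i.e.\ $|A_0\cap[2j-t']|\le j-1$, fewer than $j$ elements of $A_0$ lie below $2j-t'+1$, so $x_j\ge 2j-t'+1$. Together with the trivial bound $x_j\ge j$, and since $2j-t'+1\ge j\iff j\ge t'-1$, this gives $x_j\ge\max\{j,\,2j-t'+1\}$ for all $j$. The same computation for $B_0$ and property $t'+1$, using $i=j-t'-1$ (valid for $j\ge t'+1$), yields $z_j\ge\max\{j,\,2j-t'\}$ for all $j$.

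\emph{Step 2 (canonical sets and conclusion).} Introduce
$$A^{*}:=\{1,\dots,t'-1\}\cup\{t'+1,t'+3,\dots,2a'-t'+1\},\qquad B^{*}:=\{1,\dots,t'\}\cup\{t'+2,t'+4,\dots,2b'-t'\}.$$
The $j$-th smallest element of $A^{*}$ is exactly $\max\{j,\,2j-t'+1\}\le x_j$, so, as $\aaa'$ is shifted and $|A^{*}|=a'$, we get $A^{*}\in\aaa'$; likewise the $j$-th element of $B^{*}$ is $\max\{j,\,2j-t'\}\le z_j$, whence $B^{*}\in\bb'$ (both sets lie in $[n]$: e.g.\ $2a'-t'+1\le x_{a'}\le n$ when $a'\ge t'$, the case $a'\le t'-1$ being trivial, and similarly for $B^{*}$). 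Finally $A^{*}\cap B^{*}=\{1,\dots,t'-1\}$: both sets contain $\{1,\dots,t'-1\}$, whereas $t'\notin A^{*}$ and any element exceeding $t'$ lying in both would have to be simultaneously of the form $t'+(\text{odd})$ (to lie in $A^{*}$) and $t'+(\text{even})$ (to lie in $B^{*}$). Hence $|A^{*}\cap B^{*}|=t'-1<t'$, contradicting that $\aaa'$ and $\bb'$ are cross $t'$-intersecting.

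There is no real obstacle here — every computation is one line — but the one thing that must be arranged correctly is the relative "phase" of the two canonical sets above the common block $\{1,\dots,t'-1\}$: the offsets $2j-t'+1$ on the $\aaa'$-side versus $2j-t'$ on the $\bb'$-side are exactly what make the overlap collapse to $\{1,\dots,t'-1\}$, and this asymmetry is precisely why the statement pairs property $t'$ for $\aaa'$ with the one-notch-stronger property $t'+1$ for $\bb'$.
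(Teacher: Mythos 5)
Your proof is correct and takes essentially the same approach as the paper: in both, the failure of property $t'$ for some $A_0\in\aaa'$ forces, via shiftedness, the canonical set $\{1,\dots,t'-1\}\cup\{t'+1,t'+3,\dots\}$ into $\aaa'$, and then cross $t'$-intersection rules out any subset of $\{1,\dots,t'\}\cup\{t'+2,t'+4,\dots\}$ from $\bb'$, which by shiftedness forces property $t'+1$ on $\bb'$. You have simply spelled out the elementwise bounds $x_j\ge\max\{j,2j-t'+1\}$, $z_j\ge\max\{j,2j-t'\}$ that the paper leaves implicit in the phrase ``by shiftedness,'' and phrased the argument as a direct contradiction rather than a two-step implication.
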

\begin{proof} Suppose that $A\in \aaa'$ does not have property $t'$. Then, by shiftedness, a subset $(1,2,\ldots, t'-1, t'+1,t'+3,t'+5,\ldots)$ is in $\aaa'$. By the cross $t'$-intersecting property, no subset of $(1,2,\ldots, t',t'+2,t'+4,t'+6,\ldots)$ is in $\bb'$. By shiftedness, this implies that $\bb'$ has property $t'+1$.
\end{proof}
Together, these two propositions imply that either $|\aaa'|\le {n'\choose a'-t'}$ or $|\bb'|\le {n'\choose b'-t'-1}$.
\end{proof}

\begin{proof}[Proof of Theorem~\ref{thmshiftjuntas}]

Let us put
\begin{alignat*}{2}\mathcal J^*:=\ &\Big\{J\subset [j]\ :\  |\aaa(J,[j])|> {n-j\choose a-r}\Big\}, \ \ \ \ &&\mathcal J:=\ \Big\{A\in{n\choose a}\ :\ A\cap [j]\in \mathcal J^* \Big\};\\
\mathcal I^*:=\ &\Big\{I\subset [j]\ :\  |\bb(I,[j])|> {n-j\choose b-r}\Big\}, \ \ \ \ &&\mathcal I:=\ \Big\{B\in{n\choose b}\ :\ B\cap [j]\in \mathcal J^* \Big\}.
\end{alignat*}
Note that $\mathcal J^*$ and $\mathcal I^*$ may contain the empty set. We claim that $\mathcal J$ and $\mathcal I$ are the desired juntas. First, by the definition, it is clear that
$|\aaa\setminus \mathcal J|\le 2^{j} {n-j\choose a-r}$, and similarly for $\bb$ and $\mathcal I$.

Second, assume that $\mathcal J^*$ and $\mathcal I^*$ are not cross $t$-intersecting. This implies that there are two sets $X\in\mathcal J^*$ and $Y\in \mathcal I^*$ such that $|X\cap Y|\le t-1$. Denote $x:=|X|, y:=|Y|$. Consider, the families $\aaa':=\aaa(X,[j])$ and $\bb':=\bb(Y,[j])$. Note that the former family contains $(a-x)$-element sets and the latter contains $(b-y)$-element sets.
Due to Lemma~\ref{lemshift}, $\aaa'$ and $\bb'$ are cross $t'$-intersecting, where $t' = t+j-x-y=2r-1-x-y$. At the same time,
\begin{align}\label{eqa1}
 |\aaa'|>&\ {n-j\choose a-r},\\
\label{eqb1} |\bb'|>&\ {n-j\choose b-r}
\end{align}
 by the definition of $\mathcal J,\mathcal I$. In particular, $x,y\le r-1$.

Applying Lemma~\ref{lemcross} to our situation with $n' := n-j,$ $a': = a-x$, $b':= b-y$, $t':=2r-1-x-y$, we get that at least one of the following two inequalities is valid:
\begin{align}\label{eqa2}
  |\aaa'|\le&\ {n-j\choose (a-x)-t'}, \\
\label{eqb2}  |\bb'|\le&\ {n-j\choose (b-x)-t'}.
\end{align} We have $x,y\le r-1$ and $j\ge r$, and thus $(a-x)-t'=a-r-(r-1-y)\le a-r$. Thus, \eqref{eqa2} contradicts \eqref{eqa1}. But, similarly, \eqref{eqb2} contradicts \eqref{eqb1}. Therefore, we conclude that choosing such $X$ and $Y$ was impossible in the first place. That is, $\mathcal J^*$ and $\mathcal I^*$, and therefore $\mathcal J$ and $\mathcal I$, are cross $t$-intersecting.
\end{proof}

One may argue that the constants in \eqref{firstjunta} are still quite bad. Let us derive the following corollary, showing that, at the expense of slightly worse bounds on $n$ and $j$, one can get rid of the constants.

\begin{cor}
  \label{corshiftjuntas} Fix integers $a\ge b\ge r> t>0$ and shifted cross $t$-intersecting families $\aaa\subset {[n]\choose a}$ and $\bb\subset {[n]\choose b}$. For any $\epsilon>0$, put $j=2cr-t-1$, where $c:=1+\Big(\frac {2+\epsilon}{2\epsilon}\Big)^2\log_e 4$. If $n\ge (2+\epsilon)a$  then there exists cross $t$-intersecting $j$-juntas $\mathcal J$  and $\mathcal I$ with center $[j]$ such that
\begin{equation}\label{secondjunta}|\mathcal A\setminus \mathcal J|\le {n-r\choose a-r}\ \ \ \ \text{and} \ \ \ \ |\mathcal B\setminus \mathcal J|\le {n-r\choose b-r}.\end{equation}
\end{cor}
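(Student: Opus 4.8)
The plan is to deduce Corollary~\ref{corshiftjuntas} from Theorem~\ref{thmshiftjuntas}, applied with the parameter $r$ there replaced by the larger value $cr$, and then to eliminate the factor $2^{j}$ by a direct comparison of binomial coefficients. Set $\rho:=cr$ (in the write-up one may assume $cr\in\mathbb N$; otherwise replace it by $\lceil cr\rceil$, which changes $j$ only by $O(1)$), so that $j=2\rho-t-1$. Since $n\ge(2+\epsilon)a\ge 2a$, Theorem~\ref{thmshiftjuntas} applied with $\rho$ in the role of $r$ yields cross $t$-intersecting $j$-juntas $\mathcal J$ and $\mathcal I$ with center $[j]$ and
\[|\aaa\setminus\mathcal J|\le 2^{j}\binom{n-j}{a-\rho},\qquad |\bb\setminus\mathcal I|\le 2^{j}\binom{n-j}{b-\rho}.\]
If $b<\rho$ (in particular if $a<\rho$) then $\binom{n-j}{b-\rho}=0$, so $\bb=\mathcal I$ and there is nothing to prove for $\bb$; hence we may assume $a\ge b\ge\rho$. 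It now suffices to establish
\[2^{j}\binom{n-j}{a-\rho}\ \le\ \binom{n-r}{a-r},\]
since the corresponding bound with $b$ in place of $a$ is the same inequality (with $b\le a$ and $n\ge(2+\epsilon)b$), and then $|\bb\setminus\mathcal I|$ is controlled by $\binom{n-r}{b-r}$ as claimed.

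For this one inequality I would estimate the ratio $\binom{n-r}{a-r}\big/\binom{n-j}{a-\rho}$ head-on. Writing $\binom{n-x}{a-y}=\binom{n}{a}\,(a)_y(n-a)_{x-y}/(n)_x$ in terms of falling factorials $(m)_\ell=m(m-1)\cdots(m-\ell+1)$, a short computation gives
\[\frac{\binom{n-r}{a-r}}{\binom{n-j}{a-\rho}}=\frac{(n-r)_{\,j-r}}{(a-r)_{\,\rho-r}\,(n-a)_{\,\rho-t-1}},\]
where numerator and denominator each have exactly $j-r=(\rho-r)+(\rho-t-1)$ consecutive factors. One then pairs these up: the $\rho-r$ denominator factors of size $\asymp a$ are matched with numerator factors, contributing at least $\tfrac{n-a}{a}\ge 1+\epsilon$ each (using $n\ge(2+\epsilon)a$), while the $\rho-t-1$ denominator factors of size $\asymp n-a$ are matched with the remaining numerator factors, contributing — once one tracks the terms lost because $\rho,j$ are not negligible next to $n$, which is where $n\ge(2+\epsilon)a$ enters again — at least $\tfrac{2+\epsilon}{1+\epsilon}$ each up to a polynomial loss. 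This yields a lower bound of the shape $(1+\epsilon)^{\rho-r}\bigl(\tfrac{2+\epsilon}{1+\epsilon}\bigr)^{\,j-r}$, so it remains to verify
\[2^{\,j}\ \le\ (1+\epsilon)^{\rho-r}\Bigl(\tfrac{2+\epsilon}{1+\epsilon}\Bigr)^{\,j-r}.\]
Taking logarithms and substituting $j=2\rho-t-1$, this reduces to an inequality of the form $(\rho-r)\ln\tfrac{(2+\epsilon)^2}{4(1+\epsilon)}\ge(\text{a quantity of order }r)$. The coefficient $\ln\tfrac{(2+\epsilon)^2}{4(1+\epsilon)}=\ln\!\bigl(1+\tfrac{\epsilon^2}{4(1+\epsilon)}\bigr)$ is strictly positive for every $\epsilon>0$, which is precisely why the statement can hold for all $\epsilon$; bounding it below via $\ln(1+x)\ge x-\tfrac{x^2}{2}$ and recalling that $\rho-r=(c-1)r$ with $c-1=\bigl(\tfrac{2+\epsilon}{2\epsilon}\bigr)^2\ln4$, one checks that the desired inequality goes through.

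The main — and essentially the only — obstacle is this binomial estimate. Because the ``gain'' $\tfrac{(2+\epsilon)^2}{4(1+\epsilon)}$ of the product over $2^{j}$ is only $1+\Theta(\epsilon^{2})$, while one is multiplying on the order of $2cr$ factors, the lower-order corrections (the gaps between $j-r$ and $2\rho$ and between $\rho-r$ and $\rho$, the $-t-1$'s, and the loss incurred because $\rho$ and $j$ are not $o(n)$) are genuinely of order $r$ and must be absorbed; this is exactly what dictates the size $c\asymp 1/\epsilon^{2}$ of the constant and pins down the stated value. Everything else — quoting Theorem~\ref{thmshiftjuntas}, disposing of the degenerate ranges of $a$ and $b$, and transferring the estimate from $a$ to $b$ — is routine.
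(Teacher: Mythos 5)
Your starting point coincides with the paper's: apply Theorem~\ref{thmshiftjuntas} with $\rho=cr$ in the role of $r$, dismiss the trivial case $b<\rho$, and reduce to the single binomial inequality $2^{j}\binom{n-j}{a-\rho}\le\binom{n-r}{a-r}$. The falling-factorial identity you display for the ratio $\binom{n-r}{a-r}/\binom{n-j}{a-\rho}$ is also correct. The gap is in the factor-by-factor pairing that follows, and it is fatal as stated. Under $n\ge(2+\epsilon)a$ one has $n-a\ge\frac{1+\epsilon}{2+\epsilon}\,n$, so a numerator factor (which is $\le n$) divided by a denominator factor of size $\asymp n-a$ is \emph{at most} $\frac{2+\epsilon}{1+\epsilon}<2$, not at least, and is $\approx 1$ when $a\ll n$; correspondingly the gain from the $(a-r)_{\rho-r}$ block is at least $\frac{n}{a}\ge 2+\epsilon$ per factor, not merely $1+\epsilon$. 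Either way, for one of the two blocks each individual ratio is strictly below $2$, while the target $2^{j}$ assigns a $2$ to every one of the $j-r=(\rho-r)+(\rho-t-1)$ factors. Consequently the product lower bound you propose,
\[
(1+\epsilon)^{\rho-r}\Bigl(\tfrac{2+\epsilon}{1+\epsilon}\Bigr)^{\rho-t-1}
\]
(the exponent on the second factor must be $\rho-t-1$, not $j-r$), is strictly below $2^{j}$ for every $\epsilon\in(0,1)$, because both $\frac{1+\epsilon}{2}$ and $\frac{2+\epsilon}{2(1+\epsilon)}$ are less than $1$; no choice of $c$ rescues it. This is not a matter of ``lower-order corrections'': the obstruction is structural, and the factors $\asymp n-a$ can only be compensated by estimating the two blocks jointly rather than one ratio at a time.

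The paper's computation does precisely this. It first loosens $2^{j}\binom{n-j}{a-\rho}$ to the symmetric quantity $2^{2cr}\binom{n-(2c-1)r}{a-cr}$, so that the two falling-factorial blocks have equal length $(c-1)r$, and then uses the inequality $\frac{(u)_{t_1}(v)_{t_2}}{(u+v)_{t_1+t_2}}\le\frac{u^{t_1}v^{t_2}}{(u+v)^{t_1+t_2}}$ (from \cite{FK16}) together with the monotonicity of $x(1-x)$ on $[0,1/2]$ to obtain the per-\emph{pair} bound $\binom{n-(2c-1)r}{a-cr}/\binom{n-r}{a-r}\le\bigl(\frac{a(n-a)}{n^2}\bigr)^{(c-1)r}\le\bigl(\frac{1+\epsilon}{(2+\epsilon)^2}\bigr)^{(c-1)r}$. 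A pair here involves two numerator and two denominator factors, and the per-pair gain $\frac{(2+\epsilon)^2}{1+\epsilon}=4+\frac{\epsilon^2}{1+\epsilon}$ does exceed $4$; this excess is exactly what the choice of $c$ is calibrated to exploit. To repair your argument you would have to carry out an analogous joint estimate, for instance by applying the \cite{FK16} inequality in your asymmetric setting with $t_1=\rho-r$, $t_2=\rho-t-1$, $u=a-r$, $v=n-a$, and then optimizing over $p=(a-r)/(n-r)\le\frac{1}{2+\epsilon}$; the one-factor-at-a-time pairing cannot be made to close.
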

\begin{proof} Apply Theorem~\ref{thmshiftjuntas} $\aaa,\bb$ with $cr$ playing the role of $r$. Then we get cross $t$-intersecting juntas $\mathcal I,\mathcal J$ with center $[j]$, satisfying \eqref{firstjunta}. To prove the corollary, it is sufficient to show the second inequality in the following chain of inequalities:
$$2^{j}{n-2cr+t+1\choose k-cr}\le 2^{2cr}{n-(2c-1)r\choose k-cr}\le {n-r\choose k-r},$$
where $k\in \{a,b\}$. 
We have
\begin{align*}\frac{{n-(2c-1)r\choose k-cr}}{{n-r\choose k-r}}\le&\ \Big(\frac {k(n-k)}{n^2}\Big)^{(c-1)r}\le \Big(\frac {1+\epsilon}{(2+\epsilon)^2}\Big)^{(c-1)r}=4^{-(c-1)r}\Big(1-\Big(\frac {2\epsilon}{2+\epsilon}\Big)^2\Big)^{(c-1)r}\\
\le&\ 4^{-(c-1)r}e^{-(c-1)\big(\frac {2\epsilon}{2+\epsilon}\big)^2r}=4^{-cr},\end{align*}
where the second inequality in the first line holds due to the bound on $n$ and the fact that $x(1-x)$ is the biggest if $x$ is as close to $1/2$ as possible, and the last inequality holds due to the choice of $c$.
\end{proof}

\section{The general statement and its implications}\label{sec3}
In this section, we state a general junta approximation result and then deduce several corollaries for shifted families satisfying different properties. Let us present a geometric interpretation of the property that we are working with.

First, consider the case of one family $\ff$. Imagine that we are given an $(n+1)\times (n+1)$ grid (thought of as a subset of the plane) and we are walking on the grid starting from $(0,0)$ and  at each step we can either go from $(i,j)$ to $(i+1,j)$ or to $(i+1,j+1)$. The steps $\{i_1,\ldots, i_s\}$ of the walk at which the random walk goes ``diagonally'' give us a set in $2^{[n]}$ and, obviously, there is a one-to-one correspondence between the subsets in $2^{[n]}$ and such walks. The walks with exactly $k$ diagonal steps correspond to $k$-element subsets of $[n]$. Thus, any family $\ff$ corresponds to a collection of walks on this grid.

Next, consider the line $\lambda$ defined by $\alpha y=x+q$ for some $\alpha,q>0$ (here, by $x$ and $y$ we mean the coordinates on the plane). The property that we are interested in is ``any walk from $\ff$ hits $\lambda$ for some $x$'', where by hitting $\lambda$ we simply mean that $\alpha y\ge x+q$ for the corresponding $(x,y)$.\footnote{Of course, $(x,y)$ is an integer point.}

As was noted long time ago by the first author \cite{F1, F8}, there is a close relationship between different combinatorial properties of $\ff$ and the property that $\ff$ hits $\lambda$ for appropriately chosen $\lambda$. Here, we explore this relationship further. Actually, this relationship is the most transparent for the case of several families (i.e., in the ``cross setting'').

It is easy to generalize the  property of ``hitting the line'' to the case of several families $\ff_1,\ldots, \ff_s$.  Consider a hyperplane $\pi$ defined by $\sum_{i=1}^s \alpha_i y_i = x+q$ for some $\alpha_i,q>0$. Represent each $F_i\in \ff_i$ as a walk in the coordinate plane spanned by $y_i$ and $x$. Then we are interested in the property that the ``joint walk (represented by the points $(x,y_1,\ldots, y_s)$) hits the hyperplane $\pi$''.

For example, let us show how the cross $t$-intersecting property is related to ``hitting the plane'' property. Consider two families $\ff_1,\ff_2$ and assume that they hit the plane $\pi$  for $\pi$ defined by $y_1+y_2=x+t$. Then it is easy to see that $\ff_1$ and $\ff_2$ are cross $t$-intersecting: indeed, for any $F_1\in\ff_1,\ F_2\in\ff_2$ find an integer point $\mathbf p:=(x,y_1,y_2)$ such that the joint walk hits the plane $\pi$ at $\mathbf p$. In terms of sets, it means that $|F_1\cap [x]|+|F_2\cap [x]|\ge x+t$, and by pigeon-hole principle, these two sets intersect in at least $t$ elements, even restricted to $[x]$. More importantly, if $\ff_1$ and $\ff_2$ are cross $t$-intersecting {\it and shifted}, then they must hit the plane $\pi$ (cf. \cite{Fra3} for the easy proof)!

Finally, let us describe the idea that is behind the junta approximation result below. For some of the lines, as $x$ grows, it gets increasingly more and more unlikely that a random walk (biased random walk, or random walk with fixed number of diagonal steps) hits this line at point $x$. Thus, a bulk of the family must cross the line for small $x$ and stay under it for large $x$. But it is easy to see that this part of the family is an $x$-junta with the same ``hitting the line'' property. We have a similar situation for several families, i.e., ``sums'' of random walks ``hitting a hyperplane''.

In what follows, all the logarithms have base $e$.

\begin{thm}\label{thmshiftjunta1} Let $n,s\ge 2$ and $k_1,\ldots, k_s$ be positive integers and $q$ be a non-negative real number. Fix some positive reals $\alpha_1,\ldots, \alpha_s$ and a subset of positive integers $S$. For $i\in[s]$, let $\ff_i\subset {[n]\choose k_i}$ be such that for any $F_i\in \ff_i,$ $i\in [s]$, there exists $\ell\in S$ such that
\begin{equation}\label{eqjunta1}\sum_{i=1}^s \alpha_i\big|F_i\cap [\ell]\big|\ge \ell+q.\end{equation}
Fix a positive real $r = r(s)$. Then there exist juntas $\mathcal J_1\subset {[n]\choose k_1},\ldots, \mathcal J_s\subset {[n]\choose k_s}$ with center $[j]$ such that for any $F_i\in\mathcal J_i$, $i\in [s]$  the inequality \eqref{eqjunta1} holds with some $\ell\in S\cap [j]$ and, moreover,
$$|\ff_i\setminus \mathcal J_i|\le \Big(\frac {k_i}n\Big)^r{n\choose k_i},$$
provided one of the three conditions below holds for some  real $\epsilon\in (0,1/3]$.
\begin{enumerate}
  \item[(i)] We have $j:=\max_{i\in[s]}\{\frac {4\sigma}{\epsilon^2k_i}\big(r\log \frac {\sigma}{(1-\epsilon)k_i}+\log\frac 8{\epsilon^2}\big), \frac {\sigma r}{\epsilon k_i}\}$ and $n\ge (1-\epsilon)^{-1}\sigma,$  where $\sigma:=\sum_{i=1}^s\alpha_ik_i$ and $\sigma\ge k_i$ for every $i$.
  \item[(ii)] We have $j:=\max_{i\in[s]}\big\{\frac {4\alpha_is}{\epsilon^2}\big(r\log \frac {\alpha_is}{1-\epsilon}+\log\frac 8{\epsilon^2}\big), r\alpha_is\epsilon^{-1}\big\}$ and $n\ge \max_{i\in[s]}  (1-\epsilon)^{-1}\alpha_isk_i.$
  \item[(iii)] For some $C\ge 2$ we have $j:=\max_{i\in[s]}\alpha_isr\log_C (Ce\alpha_is)$ and $n\ge \max_{i\in[s]}  Ce\alpha_isk_i.$
\end{enumerate}
\end{thm}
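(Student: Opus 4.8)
The plan is to realize the defining family of each junta as a ``threshold'' family: those $F_i$ for which the conditional family $\ff_i(X,[j])$ restricted to the appropriate trace is large, exactly as in the proof of Theorem~\ref{thmshiftjuntas}. Concretely, for each $i$ set
$\mathcal J_i^* := \{X\subset [j] : |\ff_i(X,[j])| > (k_i/n)^r {n\choose k_i}\big/ 2^j\}$ (with a suitably chosen threshold), and let $\mathcal J_i$ be the junta with center $[j]$ and defining family $\mathcal J_i^*$. Then $|\ff_i\setminus\mathcal J_i|\le 2^j\cdot(\text{threshold})$ is immediate by a union bound over the at most $2^j$ traces $X\subset[j]$, so the quantitative loss in the approximation is controlled provided the threshold is $\ll (k_i/n)^r{n\choose k_i}/2^j$; this is where the precise value of $j$ in (i)--(iii) enters. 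The real content is to show that the $\mathcal J_i$ inherit the ``hitting the hyperplane with $\ell\in S\cap[j]$'' property.

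For the hitting property I would argue by contradiction, mirroring the structure of the proof of Theorem~\ref{thmshiftjuntas}: suppose $F_i\in\mathcal J_i$ for $i\in[s]$ but \eqref{eqjunta1} fails for every $\ell\in S\cap[j]$. Writing $X_i := F_i\cap[j]$, each $X_i\in\mathcal J_i^*$, so $\ff_i(X_i,[j])$ is large. Now I want to transfer the hypothesis that \eqref{eqjunta1} \emph{does} hold for the original families (with some $\ell\in S$, possibly $\ell>j$) to the conditional families $\ff_i(X_i,[j])$ living on $[n]\setminus[j]$: picking any $G_i\in\ff_i(X_i,[j])$, the set $G_i\cup X_i\in\ff_i$, and by shiftedness of $\ff_i$ one may further replace $G_i$ by an ``extreme'' element to force the witnessing $\ell$ to be large. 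Summing \eqref{eqjunta1} at such an $\ell$ and subtracting the contribution of the coordinates in $[j]$ (which is exactly $\sum_i\alpha_i|X_i|$, and is bounded because \eqref{eqjunta1} fails on $[j]$), I should obtain that the conditional families $\ff_i(X_i,[j])$, shifted and supported on $k_i-|X_i|$-element sets in a ground set of size $n-j$, collectively satisfy a \emph{hitting} property for a hyperplane with the same $\alpha_i$ but a strictly positive shift $q'$ — equivalently, they are ``$t$-covering'' in the sense of the random-walk lemmas. The counting part then says: if a shifted family on $[m]$ of $\kappa$-sets has this hitting property for a hyperplane with slope vector $(\alpha_i)$, it cannot be too large — this is the multi-family analogue of the reflection-principle bound in Lemma~\ref{lemcross}, giving a bound of roughly $(\kappa/m)^{\text{(something proportional to }q')}{m\choose\kappa}$ on at least one of the families. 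Plugging in $m=n-j$, $\kappa=k_i-|X_i|$ and comparing with the lower bound $|\ff_i(X_i,[j])|>(k_i/n)^r{n\choose k_i}/2^j$ yields a contradiction, provided $n$ and $j$ are as prescribed.

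The main obstacle is precisely this counting estimate: one needs a clean bound on the size of a shifted family that ``hits'' a hyperplane $\sum_i\alpha_i y_i = x+q'$ with real, possibly non-integer, weights $\alpha_i$, and one needs it in a form whose dependence on $q'$, on the $\alpha_i$, and on $s$ is strong enough that the three explicit regimes (i)--(iii) for $n$ and $j$ fall out. I expect this to be handled by a biased/weighted random walk argument: encode a uniformly random $k_i$-subset as a lattice path, note that the $i$-th path being above the sloped line at abscissa $\ell$ is, for $\ell$ not too small, an exponentially unlikely large-deviation event (Chernoff for the hypergeometric/binomial number of diagonal steps in $[\ell]$), and union-bound over $\ell\in S$. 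The delicate points are (a) reducing from ``hits for some $\ell\in S$'' to a union bound that is summable — handled by the fact that the relevant tail probabilities decay geometrically in $\ell$ once $\ell$ exceeds a threshold, so the sum is dominated by its first term, which is $j$ itself; and (b) choosing the threshold in $\mathcal J_i^*$ and the value of $j$ so that this first-term bound beats $(k_i/n)^r{n\choose k_i}/2^j$ — this is exactly the algebra encoded in conditions (i), (ii), (iii), with (ii) and (iii) being convenient specializations of (i) obtained by bounding $\sigma=\sum\alpha_ik_i\le s\max_i\alpha_ik_i$ and then either optimizing or fixing a constant $C$. The shifting step that forces the witnessing $\ell$ to be large in the conditional family is a direct generalization of the argument proving Lemma~\ref{lemshift}, so I do not anticipate difficulty there beyond careful bookkeeping of which coordinates have been fixed.
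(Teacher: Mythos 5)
Your proposal takes a genuinely different route from the paper, and I believe it has a real gap. You mirror the proof of Theorem~\ref{thmshiftjuntas}: define $\mathcal J_i^*$ as a threshold family (traces $X$ with $|\ff_i(X,[j])|$ large), get the size bound by a union bound over $2^j$ traces, and then prove the hitting property by contradiction via a multi-family counting lemma on the conditional families $\ff_i(X_i,[j])$. The paper does something structurally simpler: it decomposes each $\ff_i$ directly as $\ff_i=\ff_i'\cup\ff_i''$, where $\ff_i'=\{F:\ |F\cap[\ell]|\ge\ell k_i/\sigma\text{ for some }\ell>j\}$ and $\ff_i''=\ff_i\setminus\ff_i'$, and defines $\mathcal J_i$ to be the junta generated by the traces of $\ff_i''$ on $[j]$. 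The hitting property with $\ell\le j$ is then \emph{automatic} by a one-line pigeonhole argument (if every $F_i''$ stays strictly below its sloped line for all $\ell>j$, then $\sum_i\alpha_i|F_i''\cap[\ell]|<\ell$ for all $\ell>j$, so the witnessing $\ell$ must lie in $[j]$), and all that remains is the single-family Chernoff/large-deviation estimate bounding $|\ff_i'|$. No cross-family counting lemma is needed.

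The gap in your argument sits precisely at the spot you flag as ``the main obstacle.'' From the failure of \eqref{eqjunta1} on $S\cap[j]$ you can only derive that the conditional families $\ff_1(X_1,[j]),\ldots,\ff_s(X_s,[j])$ satisfy a \emph{joint} hyperplane-hitting condition on $[n]\setminus[j]$ with some shift $q'>0$ that can be arbitrarily small; there is no known clean multi-family analogue of Lemma~\ref{lemcross} that converts such a cross condition with real weights $(\alpha_i)$ and a tiny $q'$ into a bound on the size of a single $\ff_i(X_i,[j])$, and your sketch of a ``biased random walk'' estimate quietly treats the joint hitting event as if it were a per-family large-deviation event, which it is not: for a given $(G_1,\ldots,G_s)$, pigeonhole tells you \emph{some} $i$ must be high above its sloped line at the relevant $\ell$, but which $i$ depends on the tuple, so you cannot conclude that any fixed family has a per-family hitting property. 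The paper's $\ff_i'/\ff_i''$ split is precisely the device that turns this pigeonhole into a usable per-set statement up front, making the threshold-junta machinery and the cross-counting lemma unnecessary. If you replace your threshold definition of $\mathcal J_i^*$ with the traces of $\ff_i''$, your remaining large-deviation estimate (Chernoff for the hypergeometric count, union bound over $\ell>j$, geometric summability) is essentially correct and is what the paper's Proposition~\ref{prop12} carries out.
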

{\bf Remarks.}
\begin{itemize}
\item Conditions (i) and (ii) in the theorem coincide when $\alpha_1k_1 = \ldots = \alpha_sk_s$, which is the case in many potential applications.
\item We note that, unlike the previous results, our junta approximations work for essentially the full range of $n$. Indeed, one can easily see that in general these problems only make sense for $n\ge \sum_{i\in [s]} \alpha_ik_i$.

\item  We note that, in general, we can get rid of the $\log$ factor in the definition of $j$ by putting $r=C\log^{-1} (\sigma/k_i)$. However, if we want to get an approximation with, say, $|\ff_i\setminus \mathcal J_i|\le {n-1\choose k_i-1}$, then cannot remove $\log (\sigma/k_i)$ from Theorem~\ref{thmshiftjunta1} and make both $j$ linear in $s$ and $n$ linear in $sk$. Indeed, consider the problem of approximating one $s$-matching-free family and assume that $j=Csr$ and $n=C'sr$ for some absolute $C$, $C'$. ($s$ is sufficiently large.) Clearly, the best center $J$ of size $j$ to take is $[j]$. Then consider the following $s$-matching-free family $\mathcal F:=\{F\in {[n]\choose k}: |F\cap [2j-1]|\ge 2Cr\}$. The part of $\ff$ that is not contained in the junta, has size at least ${j-1\choose 2Cr}{n-2j-1\choose k-2Cr} = \Omega\Big(\big(\frac s2\big)^{2Cr}\big(\frac nk\big)^{2Cr-r}\Big)\cdot {n-r\choose k-r}=\Omega(s^r){n-r\choose k-r}$, where the $\Omega$-notation depends on $C,C',r$ only. Thus, $\ff$ is not well-approximated by the junta, provided $s$ is large enough.
\end{itemize}
For a set $F\subset [n],$ let us define the {\it $p$-biased measure of $F$} by $\mu_p(F) := p^{|F|}(1-p)^{n-|F|}$. For a family $\ff\subset 2^{[n]}$, we put $\mu_p(\ff):=\sum_{F\in\ff}\mu_p(F)$. We can prove the following $p$-biased non-uniform analogue of Theorem~\ref{thmshiftjunta1}. (It should be helpful to think of $p_i$ as $k_i/n$.)

\begin{thm}\label{thmshiftjunta2}
Let $n,s\ge 2$  be positive integers and $q$ be a non-negative real number. Fix some positive real $\alpha_1,\ldots, \alpha_s$ and $p_1,\ldots, p_s\in (0,1)$. Fix a subset of positive integers $S$. For $i\in[s]$, let $\ff_i\subset 2^{[n]}$ be such that for any $F_i\in \ff_i,$ $i\in [s]$, there exists $\ell\in S$ such that $F_i$ satisfy \eqref{eqjunta1}.

Fix a positive real number $r=r(s)$. Then there exist juntas $\mathcal J_1,\ldots, \mathcal J_s\subset 2^{[n]}$ with center $[j]$ such that for any $F_i\in\mathcal J_i$, $i\in [s]$  the inequality \eqref{eqjunta1} holds with some $\ell\in S\cap [j]$ and, moreover,
$$\mu_{p_i}(\ff_i\setminus \mathcal J_i)\le p_i^r,$$
provided one of the three conditions below holds for some  real $\epsilon\in (0,1/3]$.
\begin{enumerate}
  \item[(i)] We have $j:=\max_{i\in[s]}\{\frac {4\sigma}{\epsilon^2p_i}\big(r\log \frac {\sigma}{(1-\epsilon)p_i}+\log\frac 8{\epsilon^2}\big), \frac {\sigma r}{\epsilon p_i}\}$ and $n\ge (1-\epsilon)^{-1}\sigma,$  where $\sigma:=\sum_{i=1}^s\alpha_ip_i$ and $\sigma\ge p_i$ for every $i$.
  \item[(ii)] We have $j:=\max_{i\in[s]}\big\{\frac {4\alpha_is}{\epsilon^2}\big(r\log \frac {\alpha_is}{1-\epsilon}+\log\frac 8{\epsilon^2}\big),r\alpha_is\epsilon^{-1}\big\}$ and $\max_{i\in[s]}  \alpha_isp_i\le (1-\epsilon).$
  \item[(iii)] For some $C\ge 2$ we have $j:=\max_{i\in[s]}\alpha_irs\log_C (Ce\alpha_is)$ and $\max_{i\in[s]}  Ce\alpha_isp_i\le 1.$
\end{enumerate}
\end{thm}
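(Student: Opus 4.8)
The plan is to exhibit the juntas $\mathcal J_i$ explicitly as thresholded families and then verify the two required properties by hand; the only quantitative ingredient is a Chernoff-type tail bound for sums of independent binomials (the $p$-biased ``walks'' of the geometric picture), and the three regimes (i)--(iii) are just three ways of fitting that tail underneath $p_i^r$. The argument parallels what one would do for the uniform Theorem~\ref{thmshiftjunta1}, with product-Bernoulli concentration in place of the hypergeometric one. Concretely: write $\mu_{p_i}$ for the product $p_i$-biased measure on whatever ground set is relevant, let $\theta_i$ be the least integer with $\mu_{p_i}\{X\subseteq[j]:|X|\ge\theta_i\}\le\tfrac12 p_i^r$ (so $\theta_i\le(1+\epsilon)p_i j$, since the lower bounds on $j$ in (i)--(iii) make $\epsilon^2 p_i j$ dominate $r\log\tfrac1{p_i}+\log\tfrac1{\epsilon}$), and take $\mathcal J_i$ to be the junta with center $[j]$ and defining family
$$\mathcal J_i^*:=\Big\{X\subseteq[j]:\exists\,\ell\in S\cap[j],\ \alpha_i|X\cap[\ell]|\ge\ell+q\Big\}\ \cup\ \Big\{X\subseteq[j]:\ \mu_{p_i}\big(\ff_i(X,[j])\big)>\tfrac12 p_i^r\ \text{and}\ |X|<\theta_i\Big\},$$
the first block being ``the walk of $F_i$ alone already crosses the line inside $[j]$'' and the second ``the slice $\ff_i(F_i\cap[j],[j])$ is not tiny and $F_i\cap[j]$ is not abnormally heavy''.

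The measure bound is then immediate: if $F\in\ff_i\setminus\mathcal J_i$ and $X=F\cap[j]$, then $X$ is not in the second block, so either $\mu_{p_i}(\ff_i(X,[j]))\le\tfrac12 p_i^r$ or $|X|\ge\theta_i$; factoring $\mu_{p_i}\{F:F\cap[j]=X\}$ as the product of the $[j]$-mass of $X$ and the $([n]\setminus[j])$-mass of $\ff_i(X,[j])$ and summing over $X$, the first alternative contributes at most $\tfrac12 p_i^r$ and the second at most $\tfrac12 p_i^r$ by the choice of $\theta_i$, so $\mu_{p_i}(\ff_i\setminus\mathcal J_i)\le p_i^r$.

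For the cross-hitting property, fix $X_i\in\mathcal J_i^*$ for all $i$. If some $X_{i_0}$ is in its first block, with witness $\ell_0$, then $\sum_i\alpha_i|X_i\cap[\ell_0]|\ge\alpha_{i_0}|X_{i_0}\cap[\ell_0]|\ge\ell_0+q$ and we are done; so suppose all $X_i$ are in the second block and, for contradiction, that $\sum_i\alpha_i|X_i\cap[\ell]|<\ell+q$ for every $\ell\in S\cap[j]$. Put $\delta:=j+q-\sum_i\alpha_i|X_i|$; since $|X_i|<\theta_i\le(1+\epsilon)p_i j$ and the normalisation hypothesis in each case keeps $\sigma:=\sum_i\alpha_ip_i$ below $1-\epsilon$, one gets $\delta>j\big(1-(1+\epsilon)\sigma\big)\ge\epsilon^2 j>0$. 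As the slices $\ff_i(X_i,[j])$ are nonempty, for every tuple $(G_1,\dots,G_s)\in\prod_i\ff_i(X_i,[j])$ the tuple $(X_1\cup G_1,\dots,X_s\cup G_s)$ satisfies \eqref{eqjunta1} at some $\ell\in S$ which must be $>j$ (else the assumption is contradicted), and peeling off the $[j]$-part yields $\sum_i\alpha_i|G_i\cap(j,\ell]|\ge(\ell-j)+\delta$. Hence $\prod_i\ff_i(X_i,[j])\subseteq\bigcup_{m\ge1}C_m$ with $C_m:=\{(G_1,\dots,G_s):\sum_i\alpha_i|G_i\cap(j,j+m]|\ge m+\delta\}$. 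Under $\mu:=\prod_i\mu_{p_i}$ we have $\mu(C_m)=\Pr[\sum_i\alpha_i\,\mathrm{Bin}(m,p_i)\ge m+\delta]$ with independent binomials, and since $m+\delta$ beats the mean $m\sigma$ by at least $\epsilon m+\epsilon^2 j$, a Bernstein/Chernoff bound makes $\mu(C_m)$ exponentially small; summing the geometric series gives $\mu\big(\bigcup_m C_m\big)\le 2^{-s}\prod_i p_i^r$ precisely when $j$ is at least the quantity displayed in (i) (resp.\ (ii), (iii)). Since $\mu\big(\prod_i\ff_i(X_i,[j])\big)=\prod_i\mu_{p_i}(\ff_i(X_i,[j]))>2^{-s}\prod_i p_i^r$, this is a contradiction, so the $\mathcal J_i$ are cross-hitting at some $\ell\in S\cap[j]$.

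The scaffolding above is routine bookkeeping; the real work is the quantitative tail bound. One must track constants in the Chernoff estimate for $\sum_i\alpha_i\,\mathrm{Bin}(m,p_i)$ well enough that, after summing over $m\ge1$ and absorbing both the $2^{-s}$ and the $\tfrac12 p_i^r$ lost to the $\theta_i$-truncation, what remains still fits under $\prod_i p_i^r$, and then reverse-engineer from this the three incomparable sufficient conditions on $j$ together with the accompanying normalisations ((ii) being the convenient specialisation of (i) when $\alpha_ip_i$ is constant in $i$, and (iii) the variant with a free base $C$ trading a stronger bound on $\alpha_isp_i$ for a milder bound on $j$). The second, subtler point is the offset: a priori $\sum_i\alpha_i|X_i|$ could be as large as $j\sum_i\alpha_i$, making $\delta$ negative and the union bound vacuous --- the degree truncation $|X|<\theta_i$ built into $\mathcal J_i^*$ is exactly what prevents this, and one has to check that this truncation costs only $\tfrac12 p_i^r$ in measure, which once more comes down to the lower bound on $j$.
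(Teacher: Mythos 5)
Your construction is genuinely different from the paper's, and unfortunately the difference is not harmless. The paper sets
$\ff_i'':=\{F\in\ff_i:\ |F\cap[\ell]|<\ell\,p_i/\sigma\text{ for every }\ell>j\}$
and lets $\mathcal J_i$ be the saturation of $\ff_i''$ to $[j]$-equivalence classes. With that definition, the cross-hitting property is a two-line averaging argument (Proposition~\ref{propbighelp}): if some tuple only crossed at $\ell>j$, then $\sum_i\alpha_i|F_i''\cap[\ell]|<\sum_i\alpha_ip_i\ell/\sigma=\ell\le\ell+q$, contradiction. No concentration inequality, no union bound, no interaction between the families. The Chernoff/Bernstein estimate enters \emph{only} in the per-family bound $\mu_{p_i}(\ff_i')\le p_i^r$, i.e.\ a one-dimensional binomial tail for each $i$ \emph{separately}. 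Your scheme instead thresholds on slice-measure and degree, and then proves cross-hitting by a union bound over crossing points $m\ge1$ on the \emph{product} measure $\mu=\prod_i\mu_{p_i}$. This couples the $s$ families, and the target of the union bound becomes $\mu\bigl(\bigcup_mC_m\bigr)\le 2^{-s}\prod_ip_i^r$, which is a far more demanding inequality than the paper's $\mu_{p_i}(\ff_i')\le p_i^r$: its right-hand side decays with $\sum_i\log(1/p_i)$, a quantity that has no counterpart in the formulas for $j$ in (i)--(iii).

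That last point is the gap. You write that the geometric series sums to at most $2^{-s}\prod_ip_i^r$ ``precisely when $j$ is at least the quantity displayed in (i) (resp.\ (ii), (iii))'', but this is not checked, and it is exactly where the argument has to be done carefully. The exponent you gain from concentration is governed by $\delta\ge\epsilon^2j$, which in regime (i) is of order $\tfrac{\sigma}{p_i}\bigl(r\log\tfrac{\sigma}{p_i}+\log\tfrac1{\epsilon}\bigr)$, while what you must beat is of order $s+r\sum_i\log(1/p_i)$; these scale differently in the $p_i$'s and in $s$, and one has to verify (or fails to) that the former dominates under precisely the stated hypotheses. Your closing remark concedes that the ``real work'' is this tail bound and the ``reverse-engineering'' of (i)--(iii), but that work is not present, so as written this is an outline with the quantitatively decisive step missing. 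A secondary issue: your derivation $\delta>j\bigl(1-(1+\epsilon)\sigma\bigr)\ge\epsilon^2j$ uses $\sigma\le1-\epsilon$, which is what (ii) and (iii) give but is not literally what (i) states (there the condition is phrased as a bound on $n$). I recommend adopting the paper's decoupled construction: it makes the cross-hitting a pigeonhole observation and confines all the estimation to a single-family binomial tail, which is why the advertised values of $j$ suffice there.
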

The proof of Theorem~\ref{thmshiftjunta2} is almost the same as that of Theorem~\ref{thmshiftjunta1}. Moreover, the calculations become easier.

\subsection{Combinatorial consequences}
The first consequence is for families satisfying a {\it cross-union} condition. Note that the case $q=1$ corresponds to cross-dependence.
\begin{thm}\label{thmcrossunion} Let $s\ge 2$ be an integer and $\epsilon\in (0,1/3]$, $r=r(s)$ be positive real numbers. For $i\in[s]$, let $\ff_i\subset {[n]\choose k_i}$ be such that $|F_1\cup \ldots \cup F_s|\le k_1+\ldots+ k_s-q$ for some integer $q>0$. Assume that $\ff_i$ are shifted. Then there exist juntas $\mathcal J_1\subset {[n]\choose k_1},\ldots, \mathcal J_s\subset {[n]\choose k_s}$ with center $[j]$ such that for each $i$
$$|\ff_i\setminus \mathcal J_i|\le \Big(\frac{k_i}n\Big)^r{n\choose k_i}$$
and for any $F_i\in\mathcal J_i$, $i\in[s]$, we have $|F_1\cup \ldots \cup F_s|\le k_1+\ldots+k_s-q$, provided one of the following conditions holds.
\begin{enumerate}
  \item[(i)] We have $j:=\max_{i\in[s]}\{\frac {4\sigma}{\epsilon^2k_i}\big(r\log \frac {\sigma}{(1-\epsilon)k_i}+\log\frac 8{\epsilon^2}\big), \frac {\sigma r}{\epsilon k_i}\}$ and $n\ge (1-\epsilon)^{-1}\sigma,$  where $\sigma:=\sum_{i=1}^sk_i$.
  \item[(ii)] We have $j:=\frac {4s}{\epsilon^2}\big(r\log \frac {s}{1-\epsilon}+\log\frac 8{\epsilon^2}\big)$ and $n\ge \max_{i\in[s]}  (1-\epsilon)^{-1}sk_i.$
  \item[(iii)] For some $C\ge 2$ we have $j:=rs\log_C(Ces)$ and $n\ge \max_{i\in[s]}  Cesk_i.$
\end{enumerate}

\end{thm}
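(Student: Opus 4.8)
The plan is to deduce Theorem~\ref{thmcrossunion} from Theorem~\ref{thmshiftjunta1} applied with $\alpha_1=\cdots=\alpha_s=1$ and $S=\{1,\ldots,n\}$. With this choice one has $\sigma=\sum_i\alpha_ik_i=\sum_ik_i$, and each of the conditions (i), (ii), (iii) of Theorem~\ref{thmshiftjunta1} reduces to the condition with the same label here: (i) is literally identical; (iii) agrees because $\alpha_isr\log_C(Ce\alpha_is)=rs\log_C(Ces)$ and $Ce\alpha_isk_i=Cesk_i$; and in (ii) the stated value $\frac{4s}{\epsilon^2}\big(r\log\frac{s}{1-\epsilon}+\log\frac{8}{\epsilon^2}\big)$ is at least the value $\max\{\frac{4s}{\epsilon^2}(r\log\frac{s}{1-\epsilon}+\log\frac{8}{\epsilon^2}),\,rs\epsilon^{-1}\}$ produced by Theorem~\ref{thmshiftjunta1}, since $\frac{4s}{\epsilon^2}r\log\frac{s}{1-\epsilon}\ge\frac{4\log 2}{\epsilon^2}rs\ge\frac{rs}{\epsilon}$ using $s\ge2$ and $\epsilon\le1/3$. (Using a larger center $[j]$ than strictly required is harmless: a family that is a junta with center $[j']$ is also one with any center $[j]\supseteq[j']$, and the conclusion of Theorem~\ref{thmshiftjunta1} only gets stronger.) So the only real content is to translate between the cross-union condition and the hypothesis \eqref{eqjunta1} of Theorem~\ref{thmshiftjunta1}, in both directions.

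The easy direction — that the conclusion of Theorem~\ref{thmshiftjunta1} forces the cross-union property on the juntas — uses no shifting. If $F_i\in\mathcal J_i$ for $i\in[s]$ and $\ell$ satisfies $\sum_i|F_i\cap[\ell]|\ge\ell+q$, then, writing $c(v):=|\{i:v\in F_i\}|$, one has $\sum_ik_i-|F_1\cup\cdots\cup F_s|=\sum_v(c(v)-1)^+\ge\sum_{v\le\ell}(c(v)-1)=\sum_i|F_i\cap[\ell]|-\ell\ge q$, so $|F_1\cup\cdots\cup F_s|\le\sum_ik_i-q$.

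The direction where shiftedness is essential — and the step I expect to be the main obstacle — is that shifted cross-union families satisfy \eqref{eqjunta1} with all $\alpha_i=1$: for any $F_i\in\ff_i$, $i\in[s]$, there is some $\ell\in[n]$ with $\sum_i|F_i\cap[\ell]|\ge\ell+q$. I would prove the contrapositive. Assume $\sum_i|F_i\cap[\ell]|\le\ell+q-1$ for every $\ell\in[n]$, and let $x_{i,1}<\cdots<x_{i,k_i}$ be the elements of $F_i$. Regard the pair $(i,j)$ as a unit job with deadline $x_{i,j}$; the assumption says exactly that at most $\ell+q-1$ jobs have deadline $\le\ell$, equivalently that the $m$-th smallest deadline $d_m$ satisfies $d_m\ge m-q+1$. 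Schedule the jobs in increasing order of deadline, each onto the least currently free slot not exceeding its deadline, or — when all of the slots $1,\ldots,d$ are already occupied, $d$ being the deadline — onto slot $d$ (a ``collision''). Then the jobs of a fixed family $i$ occupy distinct slots, in increasing order, each at most the corresponding $x_{i,j}$, so $G_i:=\{\text{slots used by family }i\}$ is a $k_i$-element set with $G_i\le F_i$ coordinatewise (the $j$-th smallest element of $G_i$ is at most that of $F_i$), hence $G_i\in\ff_i$ by shiftedness; moreover $|G_1\cup\cdots\cup G_s|$ equals the number of distinct slots used, which is $\sum_ik_i$ minus the number of collisions. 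Finally the number of collisions is at most $q-1$: at the last collision, when job $m^{*}$ is placed, every earlier job has deadline $\le d_{m^{*}}$ and hence lies in $\{1,\ldots,d_{m^{*}}\}$, so that set is exactly the set of occupied slots and the number of collisions among the first $m^{*}$ jobs is $m^{*}-d_{m^{*}}\le m^{*}-(m^{*}-q+1)=q-1$. Thus $|G_1\cup\cdots\cup G_s|\ge\sum_ik_i-(q-1)>\sum_ik_i-q$, contradicting the cross-union hypothesis applied to $(G_1,\ldots,G_s)$.

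With both directions in hand the theorem follows: apply Theorem~\ref{thmshiftjunta1} with $\alpha_i=1$ under whichever of (i), (ii), (iii) is assumed; its conclusion yields juntas $\mathcal J_i$ with center $[j]$, the bound $|\ff_i\setminus\mathcal J_i|\le(k_i/n)^r\binom{n}{k_i}$, and \eqref{eqjunta1} for every tuple from the $\mathcal J_i$, which by the easy direction is precisely the required cross-union property of the juntas. The one place where care is genuinely needed is the scheduling argument: it has to keep the jobs of each family on distinct, correctly ordered slots (this is what makes the $G_i$ legitimate members of the shifted families) while simultaneously certifying the bound $q-1$ on the deficiency $\sum_ik_i-|G_1\cup\cdots\cup G_s|$; everything else is the bookkeeping described in the first paragraph.
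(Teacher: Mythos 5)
Your proof is correct and follows the same route as the paper: apply Theorem~\ref{thmshiftjunta1} with $\alpha_1=\cdots=\alpha_s=1$ and $S=[n]$, after checking that shiftedness plus the cross-union condition forces \eqref{eqjunta1} (the content of Proposition~\ref{propsumzero}), and that \eqref{eqjunta1} in turn forces the cross-union property on the juntas. The only place where you do more than the paper is Proposition~\ref{propsumzero}: the paper gives only a one-line ``idea of the proof,'' while your greedy scheduling argument---processing jobs by deadline, verifying that the slots assigned to family $i$ are distinct, increasing, and dominated coordinatewise by $F_i$ so that $G_i\in\ff_i$ by shiftedness, and bounding the collision count via $d_{m^*}\ge m^*-q+1$---is a correct, self-contained proof of it. Your bookkeeping matching conditions (i)--(iii) here to those of Theorem~\ref{thmshiftjunta1} (in particular that the first term already dominates $rs\epsilon^{-1}$ in case (ii)) is also fine.
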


Let us derive one particular corollary of this result.
\begin{cor}\label{cor111}
  For each $C, s\ge 2$ and $\epsilon = C^{-\alpha}$, $\alpha>0$ the following holds. If $n=C ks$, and $\ff_1,\ldots, \ff_s \subset {[n]\choose k}$ are shifted cross-dependent families, then there exist cross-dependent $\big(2(1+\alpha)s\big)$-juntas $\mathcal J_1, \ldots, \mathcal J_s$ on the same center, such that
 $|\ff_i\setminus \mathcal J_i|<\epsilon s \binom{n-1}{k-1}$ for each $i$.
\end{cor}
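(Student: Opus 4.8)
The plan is to read this off Theorem~\ref{thmcrossunion} applied with $q=1$, since cross-dependence is exactly the $q=1$ instance of the cross-union condition: for a transversal $F_1\in\ff_1,\dots,F_s\in\ff_s$ one has $|F_1\cup\dots\cup F_s|=k_1+\dots+k_s$ iff the $F_i$ are pairwise disjoint, so the families are cross-dependent iff $|F_1\cup\dots\cup F_s|\le k_1+\dots+k_s-1$ for every transversal. The ``moreover'' clause of Theorem~\ref{thmcrossunion} then returns cross-dependent juntas $\mathcal J_1,\dots,\mathcal J_s$ on a common center automatically, so the whole task reduces to choosing the free parameters so that the size of the center and the approximation error come out as claimed. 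Concretely I would invoke condition (iii) of Theorem~\ref{thmcrossunion}, but with the constant playing the role of ``$C$'' in the theorem taken to be $C/e$ rather than the corollary's $C$; then the theorem's hypothesis $n\ge Ces\,k_i$ becomes $n\ge Csk$, which is precisely our hypothesis $n=Cks$. With all $k_i=k$ this yields a center of size $j=rs\log_{C/e}(Cs)$ together with $|\ff_i\setminus\mathcal J_i|\le (k/n)^r\binom nk$, and the exponent $r$ is ours to pick.

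For the error bound, substitute $n=Cks$: since $k/n=1/(Cs)$ and $\binom nk=\tfrac nk\binom{n-1}{k-1}=Cs\binom{n-1}{k-1}$, we get $(k/n)^r\binom nk=(Cs)^{1-r}\binom{n-1}{k-1}$. I would choose $r:=\tfrac{(1+\alpha)\log C}{\log(Cs)}$, so that $1-r=\tfrac{\log s-\alpha\log C}{\log(Cs)}$ and hence $(Cs)^{1-r}=s\,C^{-\alpha}=\epsilon s$; replacing $r$ by a marginally larger value (there is slack once $C>e^2$) turns this into the strict bound $|\ff_i\setminus\mathcal J_i|<\epsilon s\binom{n-1}{k-1}$ required. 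For the center, with the same $r$,
$$j=rs\log_{C/e}(Cs)=rs\cdot\frac{\log(Cs)}{\log C-1}=\frac{(1+\alpha)s\log C}{\log C-1}\le 2(1+\alpha)s,$$
the last inequality being equivalent to $\log C\ge 2$, i.e.\ $C\ge e^2$. (For smaller $C$ the identical computation gives the weaker $j\le \tfrac{\log C}{\log C-1}(1+\alpha)s$; one either restricts to $C\ge e^2$ or absorbs this factor into the leading constant.)

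Thus the proof is essentially bookkeeping. The one genuine decision is the value of $r$: it must be large enough for the counting estimate (forcing $r\ge\tfrac{(1+\alpha)\log C}{\log(Cs)}$) yet small enough for the junta bound (forcing $rs\log_{C/e}(Cs)\le 2(1+\alpha)s$), and the stated choice is exactly the point where these two constraints meet; once $r$ is fixed everything else is a one-line estimate using $n=Cks$. If an obstacle must be named, it is making the constant in ``$2(1+\alpha)s$'' land cleanly over the whole range $C\ge 2$: condition (iii) of Theorem~\ref{thmcrossunion} only applies once $C/e\ge 2$, so for very small $C$ one would instead appeal to condition (ii), at the cost of an extra $\log s$ factor in $j$, or simply observe that the statement is of interest mainly when $C$ is bounded away from the trivial range $n\approx\sigma$.
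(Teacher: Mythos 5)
Your route is the same as the paper's: apply condition (iii) of Theorem~\ref{thmcrossunion} with $q=1$ and choose $r$ to balance the error bound against the center size. But your execution is in fact more careful, and you have found a real problem with the argument as printed.

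The paper invokes condition (iii) using the corollary's $C$ itself, writes $j = rs\log_C(Ces)$ with $r = \frac{1+\alpha}{\log_C(Cs)}$, and concludes $j < 2(1+\alpha)s$ from $Cs>e$. However, condition (iii) with parameter $C$ requires $n\ge Cesk$, whereas the corollary supplies only $n = Cks$, a factor $e$ short. Your substitution $\tilde C := C/e$ is the correct reading: it makes the theorem's hypothesis match $n = Cks$ and yields $j = rs\log_{C/e}(Cs) = \frac{(1+\alpha)s\log C}{\log C-1}$, which is at most $2(1+\alpha)s$ only for $C\ge e^2$ (and one also needs $\tilde C\ge 2$, i.e.\ $C\ge 2e$). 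Thus the stated range $C\ge 2$ is not actually covered by the condition-(iii) argument, in either your version or the paper's; your proposed remedies (restrict to $C\ge e^2$, or absorb $\frac{\log C}{\log C-1}$ into the leading constant) are the sensible ones. The computation of $r$ for the error term is identical to the paper's and is correct, so the only issue is this constant-tracking, which the paper glossed over and you caught.
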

\begin{proof}
  Apply Theorem~\ref{thmcrossunion} (iii) with $k_1 = \ldots = k_s = k$ and $r = \log_{n/k} \frac n{\eps ks} = \log_{Cs} C^{1+\alpha} = \frac{1+\alpha}{\log_C (Cs)}$. Then $|\ff_i\setminus \mathcal J_i|\le \big(\frac kn\big)^r{n\choose k} = \frac{\eps ks}n{n\choose k} = \eps s{n-1\choose k-1}$. At the same time, we have $$j = rs\log_C(Ces) = (1+\alpha)s\frac{\log_C(Ces)}{\log_C(Cs)}< 2(1+\alpha)s,$$
   where the last inequality is valid since $Cs>e$.
\end{proof}

The validity of Corollary~\ref{cor111} for not necessarily shifted families, but with unspecified dependencies between constants $C,\eps$ and $C'$, where the juntas have center of size $C' s$, was 
announced by Keevash, Lifshitz,  Long, and Minzer  as a consequence of general sharp threshold-type results (for the latter, see \cite{KLLM}).

\begin{proof}[Proof of Theorem~\ref{thmcrossunion}] The following proposition is a consequence of the shiftedness of $\ff_i$ (see \cite{Fra3}).
\begin{prop}\label{propsumzero} For any $F_i\in \ff_i$, $i\in [s]$, there exists $\ell$ such that
\begin{equation}\label{eqshift12}
\sum_{i=1}^s |F_i\cap [\ell]|\ge \ell+q.\end{equation} \end{prop}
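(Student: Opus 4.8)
The plan is to reduce the statement to a single, very simple observation about shifted families: if $\ff$ is shifted and $F\in\ff$, then replacing $F$ by an ``initial-segment-like'' set that is at least as spread towards small coordinates only makes the union smaller, so the extremal configuration pushing the union up is obtained on down-compressed sets. Concretely, I would argue as follows. Fix $F_i\in\ff_i$ for each $i$, and suppose for contradiction that for \emph{every} $\ell\in[n]$ we have $\sum_{i=1}^s|F_i\cap[\ell]|\le \ell+q-1$. I want to derive that then the $F_i$ can be shifted down to sets $G_i\in\ff_i$ (shiftedness is exactly what lets us do this) whose union is large, contradicting the cross-union hypothesis $|F_1\cup\cdots\cup F_s|\le k_1+\cdots+k_s-q$. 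The key quantitative link is the identity
\[
\Big|\bigcup_{i=1}^s F_i\Big| \;=\; \sum_{\ell=1}^{n}\mathbf 1\Big[\ell\in\textstyle\bigcup_i F_i\Big],
\]
which I want to compare, coordinate by coordinate, with the ``deficiency'' $\big(\sum_i|F_i\cap[\ell]|\big)-\big(\sum_i|F_i\cap[\ell-1]|\big)=\sum_i\mathbf 1[\ell\in F_i]$.

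First I would set up the telescoping bookkeeping. Let $d(\ell):=\ell+q-\sum_{i=1}^s|F_i\cap[\ell]|$; the assumption is $d(\ell)\ge 1$ for all $\ell$ (and $d(0)=q\ge1$, $d(n)=n+q-\sum k_i$). Then $d(\ell)-d(\ell-1)=1-\sum_i\mathbf 1[\ell\in F_i]$, so $\ell\notin\bigcup_i F_i$ forces $d(\ell)=d(\ell-1)+1$, while $\ell$ lying in exactly one $F_i$ keeps $d$ constant, and $\ell$ lying in $\ge2$ of the $F_i$ decreases $d$. The crucial point is that by shiftedness we may assume the $F_i$ are chosen to \emph{minimize} $\sum_i\sum_{x\in F_i}x$ (or, more cleanly, are each ``compressed'' so that whenever $y<x$, $y\notin F_i$, $x\in F_i$ we could swap — but that swap is exactly a shift and keeps us inside $\ff_i$); among such minimal representatives, each $F_i$ is a down-set-generated set, and the only way $d$ can stay $\ge1$ everywhere \emph{while} the total size is $\sum k_i$ is if the sets overlap heavily on $[\ell^\*]$ for the threshold $\ell^\*$ where the sizes get ``used up''. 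Counting: the number of $\ell\in[n]$ with $\ell\notin\bigcup_i F_i$ is $\sum_\ell(d(\ell)-d(\ell-1))_{+}\ge d(n)-d(0)+(\text{number of }\ell\text{ with }\ge2\text{ memberships}) = (n-\sum k_i)+(\text{overlap count})$, hence $|\bigcup_i F_i|=n-\#\{\ell\notin\bigcup F_i\}\le \sum_i k_i-(\text{overlap count})$, and I need the overlap count to be $\ge q$, which is forced precisely because $d$ never drops below $1$ starting from $d(0)=q$.

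Actually the clean way to run this is \emph{without} contradiction. For given shifted $F_i$, repeatedly apply shifts $(y,x)$ with $y<x$ to the family $\{F_i\}$ (legal since each $\ff_i$ is shifted) until no element can be pushed down; call the result $F_i'\in\ff_i$. For the down-compressed system one checks directly that $F_i'=\{1,2,\dots\}$ patterns interleave so that $\ell^\*:=|F_1'\cup\cdots\cup F_s'|$ satisfies $\sum_i|F_i'\cap[\ell^\*]|=\sum_i k_i\ge \ell^\*+q$ (the last inequality is exactly the cross-union bound rewritten), giving \eqref{eqshift12} with $\ell=\ell^\*$ for the compressed system; and since $|F_i'\cap[\ell]|\ge|F_i\cap[\ell]|$ holds for \emph{all} $\ell$ after down-compression (pushing elements left only increases every initial-segment count), the inequality \eqref{eqshift12} transfers back verbatim to the original $F_i$.

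The main obstacle I anticipate is the last monotonicity claim — that one can simultaneously down-compress all $s$ sets while \emph{never decreasing any} $|F_i\cap[\ell]|$ — because a single shift $(y,x)$ applied to the whole system might move an element in $F_i$ but leave $F_{i'}$ untouched, and one must be sure the relevant initial-segment sums are monotone and that the process terminates in the ``fully left-packed'' configuration whose union has exactly the claimed size. This is standard shifting folklore (it is the content of the cited reference \cite{Fra3}), so in the write-up I would either quote it or give the two-line induction on $\sum_i\sum_{x\in F_i}x$: each down-shift strictly decreases this quantity and weakly increases $|F_i\cap[\ell]|$ for every $\ell$ and every $i$, so the claim follows by taking the minimizer.
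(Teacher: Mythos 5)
Your proposal has a genuine gap, and in fact it is the same gap repeated twice. Both of your arguments try to transfer the inequality \eqref{eqshift12} between the given sets $F_i$ and their down-compressed versions $F_i'$, but the transfer goes in the wrong direction. You correctly observe that a down-shift weakly \emph{increases} every $|F_i\cap[\ell]|$, so $|F_i'\cap[\ell]|\ge |F_i\cap[\ell]|$ for all $\ell$. But then knowing $\sum_i|F_i'\cap[\ell^\*]|\ge \ell^\*+q$ tells you nothing about $\sum_i|F_i\cap[\ell^\*]|$ — you would need $|F_i'\cap[\ell]|\le|F_i\cap[\ell]|$, the opposite of what down-compression gives. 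So the ``clean'' second version does not close. (The obstacle you anticipate at the end — whether all initial-segment counts are monotone under the compressions — is a non-issue; the issue is that the monotonicity is in the unusable direction.)

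The first version has the same structural problem dressed differently, plus a false step. Your telescoping identity for $d(\ell)=\ell+q-\sum_i|F_i\cap[\ell]|$ is correct, but the statement ``the overlap count $\ge q$ is forced because $d$ never drops below $1$'' is not: take $F_1=\{1,3\}$, $F_2=\{2,4\}$, $q=1$; then $d(\ell)=1$ for all $\ell$ yet the overlap is $0$. Moreover, ``we may assume the $F_i$ minimize $\sum_i\sum_{x\in F_i}x$'' is not a legitimate reduction here — the proposition quantifies over all $F_i\in\ff_i$, and replacing them by minimizers changes the $d$-profile in the unhelpful direction as above, so nothing transfers back.

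The paper's (one-line) proof goes the other way around, and the distinction matters. Assume for contradiction that $d(\ell)\ge 1$ for every $\ell$, i.e.\ at most $\ell+q-1$ of the $\sum_i k_i$ elements of the multiset $F_1\sqcup\cdots\sqcup F_s$ lie in $[\ell]$ for every $\ell$. One uses this bound to \emph{construct} coordinate-wise smaller sets $G_i\le F_i$ — for instance by listing the tokens of $\sqcup_i F_i$ in increasing order and greedily pushing the $j$-th token down to a position around $j-q+1$ that is still free in its own $G_i$ — so that the $G_i$ overlap in fewer than $q$ positions, i.e.\ $|G_1\cup\cdots\cup G_s|>k_1+\cdots+k_s-q$. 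Shiftedness of $\ff_i$ ensures $G_i\in\ff_i$, contradicting the cross-union hypothesis. Shiftedness is thus used to legitimize the $G_i$, and the assumed bound $d\ge 1$ is used to build them; you instead tried to use shiftedness to modify the $F_i$ themselves and then read off \eqref{eqshift12}, which cannot work because the initial-segment counts move the wrong way.
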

\begin{proof}[Idea of the proof] Assuming the contrary, it is not difficult to find shifts of $F_i$ that would not satisfy $|F_1\cup\ldots \cup F_s|< k_1+\ldots+k_s-q$.
\end{proof}

Now we can apply Theorem~\ref{thmshiftjunta1} (in one of the three assumptions on $j,n$)  to $\ff_i$ with $\alpha_1=\ldots=\alpha_s=1$ and $S:=[n]$. This gives us the juntas that satisfy \eqref{eqshift12}. However, it is then clear that  for any $F_i\in\mathcal J_i$, $i\in[s]$, we have $|F_1\cup \ldots \cup F_s|\le k_1+\ldots+k_s-q$.
\end{proof}

\begin{thm}
  Let $s\ge 2$, $t\ge 1$ be integers and $\epsilon\in (0,1/3]$, $r=r(s)$ be positive real numbers. For $i\in[s]$, let $\ff_i\subset {[n]\choose k_i}$ be such that $|F_1\cap \ldots \cap F_s|\ge t$. Moreover, assume that $\ff_i$ are shifted. Then there exists juntas $\mathcal J_1\subset {[n]\choose k_1},\ldots, \mathcal J_s\subset {[n]\choose k_s}$ with center $[j]$ such that for each $i$
$$|\ff_i\setminus \mathcal J_i|\le \Big(\frac{k_i}n\Big)^r{n\choose k_i}$$
and such that for any $F_i\in\mathcal J_i$, $i\in[s]$, we have $|F_1\cap \ldots \cap F_s|\ge t$, provided that one of the following conditions holds.

\begin{enumerate}
  \item[(i)] We have $j:=\max_{i\in[s]}\{\frac {4\sigma}{\epsilon^2k_i}\big(r\log \frac {\sigma}{(1-\epsilon)k_i}+\log\frac 8{\epsilon^2}\big), \frac {\sigma r}{\epsilon k_i}\}$ and $n\ge (1-\epsilon)^{-1}\sigma,$  where $\sigma:=\sum_{i=1}^s\frac{k_i}{s-1}$.
  \item[(ii)] We have $j:=\max\big\{\frac {8}{\epsilon^2}\big(r\log \frac s{(s-1)(1-\epsilon)}+\log\frac 8{\epsilon^2}\big), \frac{r s}{\epsilon(s-1)}\big\}$ and $n\ge \max_{i\in[s]}  (1-\epsilon)^{-1}\frac{s}{s-1}k_i.$
  \item[(iii)] We have $j:=3sr$ and $n\ge 2e^2k.$
\end{enumerate}
\end{thm}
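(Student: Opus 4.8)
The strategy mirrors the proof of Theorem~\ref{thmcrossunion}: first reduce the combinatorial hypothesis ($|F_1\cap\cdots\cap F_s|\ge t$) to the ``hitting a hyperplane'' inequality \eqref{eqjunta1} for shifted families, and then invoke Theorem~\ref{thmshiftjunta1} with the appropriate choice of coefficients $\alpha_i$, offset $q$, and step-set $S$. So the first step is to prove the analogue of Proposition~\ref{propsumzero}: for shifted $\ff_i$ with $|F_1\cap\cdots\cap F_s|\ge t$ for all choices, there exists $\ell$ with
\[
\sum_{i=1}^s\frac{1}{s-1}\bigl|F_i\cap[\ell]\bigr|\ge \ell + \frac{t}{s-1}.
\]
(The scaling by $1/(s-1)$ is forced by the form of $\sigma$ in condition (i), so this is the right normalization.) To see why $1/(s-1)$ appears: if $x\in F_i$ for at least $s-1$ of the families then $\sum_i |F_i\cap\{x\}|\ge s-1$, i.e. $x$ contributes at least $1$ to the left side after scaling, whereas a ``generic'' element of $[\ell]$ contributes $1$ to the right side; an element in the common intersection contributes the full $s/(s-1)>1$. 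The proof that some such $\ell$ exists should follow the shifting argument sketched for Proposition~\ref{propsumzero}: assuming the inequality fails for every $\ell$, repeatedly shift the $F_i$ down to produce sets $F_i'$ whose common intersection has size $<t$, contradicting the $t$-intersecting hypothesis (which is preserved by shifting).

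Once \eqref{eqjunta1} is established with $\alpha_i = \tfrac1{s-1}$ for all $i$ and $q = \tfrac{t}{s-1}$ and $S=[n]$, apply Theorem~\ref{thmshiftjunta1}. With these parameters $\sigma = \sum_i \alpha_i k_i = \sum_i \frac{k_i}{s-1}$, which is exactly the $\sigma$ in condition~(i); substituting into the general condition~(i) of Theorem~\ref{thmshiftjunta1} gives precisely condition~(i) here. For condition~(ii) one uses the general condition~(ii) with $\alpha_i s = \frac{s}{s-1}$: the factor $\frac{4\alpha_i s}{\epsilon^2} = \frac{4s}{\epsilon^2(s-1)}$ — but note the stated bound has $\frac{8}{\epsilon^2}$, so one uses $\frac{s}{s-1}\le 2$ to absorb the constant, and likewise $r\alpha_i s\epsilon^{-1} = \frac{rs}{\epsilon(s-1)}$ matches directly; the bound on $n$ becomes $n\ge (1-\epsilon)^{-1}\alpha_i s k_i = (1-\epsilon)^{-1}\frac{s}{s-1}k_i$. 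For condition~(iii) one takes $C=2$ in the general condition~(iii): then $Ce\alpha_i s = 2e\cdot\frac{s}{s-1}\le 2e^2$ when $s\ge 2$ (indeed $\frac{s}{s-1}\le 2\le e$), giving $n\ge 2e^2 k$; and $j = \alpha_i s r\log_2(2e\alpha_i s) = \frac{s}{s-1}r\log_2(2e\frac{s}{s-1})\le 2r\cdot\log_2(2e\cdot 2) = 2r\log_2(4e) < 2r\cdot 3.85 < 3sr$ — so $j=3sr$ is a safe (if slightly loose) upper bound. Finally, since Theorem~\ref{thmshiftjunta1} guarantees that every $F_i\in\mathcal J_i$ satisfies \eqref{eqjunta1} with $\ell\in S\cap[j]=[j]$, we must translate this back: $\sum_i\frac1{s-1}|F_i\cap[\ell]|\ge\ell+\frac t{s-1}$ implies $\sum_i|F_i\cap[\ell]|\ge (s-1)\ell + t$, and a double-counting / pigeonhole argument on $[\ell]$ (each element lies in at most $s$ of the $F_i\cap[\ell]$) forces at least $t$ elements of $[\ell]$ to lie in all $s$ sets, hence $|F_1\cap\cdots\cap F_s|\ge t$.

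The one genuinely delicate point is the first step — proving the $t$-intersecting analogue of Proposition~\ref{propsumzero}, including pinning down that the correct coefficient is $\alpha_i=1/(s-1)$ and the correct offset is $t/(s-1)$. The shifting argument must be run carefully: one needs that if the scaled inequality fails at every level $\ell$, then greedily shifting each $F_i$ (replacing element $x$ by a smaller unused element when possible) yields families still in $\ff_i$ whose overall intersection pattern on each initial segment is ``too light,'' and then concludes at $\ell=n$ that $\sum_i|F_i| < (s-1)n + t$ is compatible with — in fact forces, after full shifting — a common intersection of size less than $t$. The rest of the proof is bookkeeping: matching the three parameter regimes against the three cases of Theorem~\ref{thmshiftjunta1} and checking the (elementary) numerical inequalities $\frac{s}{s-1}\le 2$, $2e\cdot\frac{s}{s-1}\le 2e^2$, and $2r\log_2(4e)<3sr$ for $s\ge 2$.
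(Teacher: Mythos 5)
Your overall approach is exactly the paper's: use shiftedness to reduce the $t$-intersection hypothesis to the line-hitting inequality~\eqref{eqjunta1} with $\alpha_i=\tfrac1{s-1}$, $q=\tfrac t{s-1}$, $S=[n]$, and then invoke Theorem~\ref{thmshiftjunta1}. Your verifications of parts~(i) and~(ii) and your back-translation (the pigeonhole step showing $\sum_i|F_i\cap[\ell]|\ge(s-1)\ell+t$ forces at least $t$ common elements in $[\ell]$) are all correct.

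However, your treatment of part~(iii) contains a numerical error. You choose $C=2$, obtaining
\[
j=\frac{s}{s-1}\,r\log_2\!\Bigl(2e\cdot\frac{s}{s-1}\Bigr)\le 2r\log_2(4e),
\]
and then claim $2r\log_2(4e)<2r\cdot 3.85<3sr$. But $2\cdot 3.85=7.7$, so the last inequality requires $s>7.7/3\approx 2.57$, and it fails precisely at $s=2$ (where $3sr=6r<2r\log_2(4e)\approx 6.89r$). Since $s=2$ is also the worst case of the left-hand side (as $\frac{s}{s-1}$ is decreasing), the choice $C=2$ does not deliver the stated bound $j=3sr$. The paper instead takes $C=e$, for which
\[
j=\frac{s}{s-1}\,r\log\!\Bigl(e^2\cdot\frac{s}{s-1}\Bigr)\le 2r\log(2e^2)=2r(2+\log 2)\approx 5.39r<6r=3sr
\]
at $s=2$, with even more room to spare for $s\ge 3$, and gives $n\ge e^2\cdot\frac{s}{s-1}k_i\le 2e^2k$, matching the stated condition on $n$. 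So the fix is just to replace $C=2$ by $C=e$; with that correction your proof coincides with the paper's.
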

\begin{proof}
  The proof is very similar to the previous one. Due to shiftedness of $\ff_i$, for any $F_i\in \ff_i,$ $i\in[r]$, we must have
  $$\sum_{i=1}^s \frac 1{s-1}\big|F_i\cap [\ell]\big|\ge \ell +\frac t{s-1}.$$
  Then apply Theorem~\ref{thmshiftjunta1} with $q:=\frac t{s-1}$, $\alpha=\alpha_i := \frac 1{s-1}$ (and $C = e$ for part (iii)).
\end{proof}

Analogous consequences for non-uniform families can be obtained from the $p$-biased Theorem~\ref{thmshiftjunta2}. Since it is straightforward, we leave the details to the reader.

\subsection{Why do juntas necessarily have center $\mathbf{[j]}$?}\label{sec32}
In this subsection, we prove a simple and intuitive statement that juntas approximating shifted families should have center  $[j]$.

For a given pair of indices $1\le u<v\le n$ and a set $A \subset [n]$, define its \textit{$(u\leftarrow v)$-shift} $S_{u\leftarrow v}(A)$ as follows. If $u\in A$ or $v\notin A$, then $S_{u\leftarrow v}(A) = A$. If $v\in A, u\notin A$, then $S_{u\leftarrow v}(A) := (A-\{v\})\cup \{u\}$. That is, $S_{u\leftarrow v}(A)$ is obtained from $A$  by replacing $v$ with $u$.
The  $(u\leftarrow v)$-shift $S_{u\leftarrow v}(\mathcal A)$ of a family $\mathcal A$ is as follows:
$$S_{u\leftarrow v}(\mathcal A) := \{S_{u\leftarrow v}(A)\, :\,  A\in \mathcal A\}\cup \{A\, :\,  A,S_{u\leftarrow v}(A)\in \mathcal A\}.$$

\begin{prop} Suppose that $\mathcal I\subset 2^{[n]}$ is a junta with center $I$ of size $j$ defined by family $\mathcal I^*$. Assume that $\ff\subset 2^{[n]}$ is shifted and satisfies $|\ff\setminus \mathcal I|\le w$. Then, for any $u< v$, the junta $\mathcal J$ with center $S_{u\leftarrow v}(I)$ and defined by the family $S_{u\leftarrow v}(\mathcal I^*)$ satisfies $|\ff\setminus \mathcal J|\le w$.
\end{prop}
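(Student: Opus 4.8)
The plan is to prove the equivalent inequality $|\ff\cap\mathcal J|\ge|\ff\cap\mathcal I|$ by slicing both juntas over the (small) set on which they are both supported, and to feed shiftedness in through a single monotonicity fact about these slices. First dispose of the trivial case: if $v\notin I$ then $S_{u\leftarrow v}(I)=I$, and since no member of $\mathcal I^*\subseteq 2^{I}$ contains $v$ we also have $S_{u\leftarrow v}(\mathcal I^*)=\mathcal I^*$, so $\mathcal J=\mathcal I$ and there is nothing to prove. So assume $v\in I$ and put $K:=I\cup\{u\}$, so $|K|\le j+1$ and membership of any set in $\mathcal I$ or in $\mathcal J$ is determined by its intersection with $K$ (here $I':=S_{u\leftarrow v}(I)\subseteq K$). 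For $T\subseteq K$ write $\ff_T:=\{R\subseteq[n]\setminus K: R\cup T\in\ff\}$; then $|\ff\cap\mathcal I|=\sum_{T\subseteq K,\ T\cap I\in\mathcal I^*}|\ff_T|$ and $|\ff\cap\mathcal J|=\sum_{T\subseteq K,\ T\cap I'\in S_{u\leftarrow v}(\mathcal I^*)}|\ff_T|$.

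The one place shiftedness is used is the following: whenever $T,T'\subseteq K$ satisfy $T'=S_{u\leftarrow v}(T)$ (that is, $u\notin T\ni v$ and $T'=(T\setminus\{v\})\cup\{u\}$), the identity map $R\mapsto R$ embeds $\ff_T$ into $\ff_{T'}$, since $R\cup T'=S_{u\leftarrow v}(R\cup T)\in\ff$ by shiftedness; hence $|\ff_T|\le|\ff_{T'}|$, and by iteration $|\ff_T|\le|\ff_{\widetilde T}|$ whenever $\widetilde T$ is obtained from $T$ by a (possibly empty) sequence of such down-shifts inside $K$. Consequently it suffices to exhibit a bijection $\psi$ from $\{T\subseteq K:T\cap I\in\mathcal I^*\}$ onto $\{T\subseteq K:T\cap I'\in S_{u\leftarrow v}(\mathcal I^*)\}$ with the property that $\psi(T)$ is always obtained from $T$ by down-shifts inside $K$: summing $|\ff_T|\le|\ff_{\psi(T)}|$ over such a $\psi$ yields $|\ff\cap\mathcal I|\le|\ff\cap\mathcal J|$, hence $|\ff\setminus\mathcal J|=|\ff|-|\ff\cap\mathcal J|\le|\ff|-|\ff\cap\mathcal I|=|\ff\setminus\mathcal I|\le w$, as desired.

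Constructing $\psi$ is the \emph{heart of the matter} and, I expect, the main obstacle. The idea is to start from the standard bijection $\beta$ between $\mathcal I^*$ and its shift $S_{u\leftarrow v}(\mathcal I^*)$ — the one fixing every $X\in\mathcal I^*$ with $S_{u\leftarrow v}(X)\in\mathcal I^*$ and sending each remaining $X$ to $S_{u\leftarrow v}(X)$, which is a size-preserving bijection moving sets only downward — and to lift it along the two description maps $T\mapsto T\cap I$ and $T\mapsto T\cap I'$. If $u\in I$ then $I'=I$, $K=I$, and the lift is essentially $\beta$ itself, so this case should be routine; the work is when $u\notin I$, where one must additionally track the free coordinate $u$ and, for each $Z\subseteq I\setminus\{v\}$, the behaviour of $\mathcal I^*$ on the pair $\{Z,Z\cup\{v\}\}$ against the four sets $Z,Z\cup\{u\},Z\cup\{v\},Z\cup\{u,v\}$ of $2^K$. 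Verifying that the resulting $\psi$ is well-defined, bijective, and never moves a set up in the shift order is the bookkeeping that carries the proof; the remaining ingredients — the monotonicity above, the cardinality-invariance of $S_{u\leftarrow v}$, and $S_{u\leftarrow v}(\ff)=\ff$ — are immediate.
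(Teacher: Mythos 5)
Your framework is a genuinely different route from the paper's. The paper builds an explicit injection $\phi:\ff\setminus\mathcal J\to\ff\setminus\mathcal I$ by a three-way case split on $F\cap\{u,v\}$: it takes $\phi(F)=F$ when $|F\cap\{u,v\}|\neq 1$; $\phi(F)=F\oplus\{u,v\}$ when $|F\cap\{u,v\}|=1$ and $F\oplus\{u,v\}\in\ff$; and $\phi(F)=F$ in the leftover case $F\cap\{u,v\}=\{u\}$, $F\oplus\{u,v\}\notin\ff$, which is where shiftedness is invoked. Your version packages the same two local moves ($T\mapsto T$ or $T\mapsto S_{u\leftarrow v}(T)$) at the level of traces on $K=I\cup\{u\}$, and absorbs the adaptive ``is $F\oplus\{u,v\}\in\ff$?'' branching into the slice inequality $|\ff_T|\le|\ff_{S_{u\leftarrow v}(T)}|$. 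This is a tidy reformulation, but it asks for more than the paper does: you need a single trace-level bijection $\psi$ that works uniformly, where the paper only needs a per-set choice.

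The step you flagged as ``the heart of the matter'' is indeed where the proof breaks, and it is not bookkeeping: the bijection $\psi$ cannot exist in the generality of the statement, because the target inequality $|\ff\cap\mathcal I|\le|\ff\cap\mathcal J|$ is simply false for an arbitrary defining family $\mathcal I^*$. Take $n=2$, $u=1$, $v=2$, $I=\{2\}$, $\mathcal I^*=\{\emptyset\}$ (so $\mathcal I=\{F:2\notin F\}$), and $\ff=\{\emptyset,\{1\}\}$, which is shifted. Then $\ff\setminus\mathcal I=\emptyset$, so $w=0$; but $S_{1\leftarrow2}(I)=\{1\}$, $S_{1\leftarrow2}(\mathcal I^*)=\{\emptyset\}$, hence $\mathcal J=\{F:1\notin F\}$ and $\ff\setminus\mathcal J=\{\{1\}\}$. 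In your language: $D=\{\emptyset,\{1\}\}$ but $C=\{\emptyset,\{2\}\}$, and $\{1\}$ has no admissible image ($\{1\}$ is fixed by $S_{1\leftarrow2}$ and $\{1\}\notin C$). More generally, partitioning $2^K$ into quadruples $\{Z,Z\cup\{u\},Z\cup\{v\},Z\cup\{u,v\}\}$ for $Z\subseteq I\setminus\{v\}$, the obstruction is exactly the quadruples with $Z\in\mathcal I^*$ and $Z\cup\{v\}\notin\mathcal I^*$: there the $\mathcal I$-allowed traces are $\{Z,Z\cup\{u\}\}$, the $\mathcal J$-allowed ones are $\{Z,Z\cup\{v\}\}$, and $Z\cup\{u\}$ cannot be down-shifted into the latter.

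The configuration $Z\in\mathcal I^*$, $Z\cup\{v\}\notin\mathcal I^*$ is ruled out precisely when $\mathcal I^*$ is upward closed in $2^I$; under that hypothesis your $\psi$ can be written down quadruple by quadruple in a few lines, and the argument closes. You should be aware that the paper's proof leans on the same unstated hypothesis: in its final case it asserts that $F\notin\mathcal J$ forces $F\notin\mathcal I$ when $F\cap\{u,v\}=\{u\}$ and $F\oplus\{u,v\}\notin\ff$, and unwinding the definitions this is exactly the implication $Z\cup\{v\}\notin\mathcal I^*\Rightarrow Z\notin\mathcal I^*$. So the comparison is: the paper's argument is shorter and avoids constructing a global bijection, but both proofs require the monotonicity of $\mathcal I^*$ that the statement does not record; if you want to finish your version, state that hypothesis, then write out $\psi$ explicitly over the four quadruple types and verify it moves sets only downward — that part is then genuinely routine.
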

Obviously, after several such $u\leftarrow v$-shifts, the center becomes $[j]$ for $j=|I|$. Moreover, if we are shifting several juntas with center $I$ at the same time, their combinatorial properties are preserved, since the juntas stay the same up to relabeling of the vertices.
\begin{proof}
It is clearly sufficient to deal with the case $I\cap \{u,v\}=v$.  Let us construct an injective map $\phi$ from $\ff\setminus \mathcal J$ into $\ff\setminus \mathcal I$. If $F\in \ff\setminus \mathcal J$ satisfies $|F\cap \{u,v\}|\ne 1$, then $F \in \ff\setminus \mathcal I$ as well, and we put $\phi(F) = F$. If $|F\cap \{u,v\}| = 1$, then $F\in \mathcal I$ if and only if $F\oplus \{u,v\}\in \mathcal J$. We then put $\phi(F) = F\oplus\{u,v\}$ if $F\oplus \{u,v\}\in \ff$. Due to shiftedness, we are only left to deal with $F$, such that $F\cap \{u,v\}=\{u\}$ and such that $F\oplus \{u,v\}\notin\ff$. But if such $F$ does not belong to $\mathcal J$, then $F$ does not belong to $\mathcal I$ (because $S_{u\leftarrow v}(F) = F$). Therefore, we can put $\phi(F) = F$ for such $F$.
\end{proof}
\section{Proofs of Theorems~\ref{thmshiftjunta1}\label{sec4} and~\ref{thmshiftjunta2}}
\subsection{Theorem~\ref{thmshiftjunta1}}
For each $i\in [s]$, let us put
\begin{equation*}\ff'_i:=\big\{F\in \ff_i: \big|F\cap [\ell]\big|\ge \ell\cdot\frac{k_i}{\sigma}\text{ for some }\ell>j\big\},\ \ \ \ \ff''_i:=\ff_i\setminus \ff'_i.\end{equation*}
Let us put $$\mathcal J_i:=\Big\{G\in{[n]\choose k_i}: G\cap [j] = F\cap [j] \text{ for some } F\in \ff''_i\Big\}.$$

The following is the key property of sets in $\ff''_i$.

\begin{prop}\label{propbighelp} For any $F''_1\in\ff''_1\ldots, F''_s\in \ff''_s$, the property \eqref{eqjunta1} holds for some $\ell\in S\cap [j]$.
\end{prop}
\begin{proof}
  If not, then \eqref{eqjunta1} must hold for some $\ell >j$ by the hypothesis. But, by definition of $\ff''_i$, we have $\alpha_i|F''_i\cap [\ell]|<\alpha_ik_i\ell/\sigma$ for any $F''_i\in \ff''_i$ and $\ell>j$, and thus $\sum_{i=1}^s \alpha_i|F''_i\cap [\ell]|<\ell\le \ell+q$, a contradiction.\end{proof}

Let us note the following simple facts:
\begin{itemize}
  \item Each $\mathcal J_i$ is a $j$-junta with center in $[j]$.
  \item We have $\ff''_i\subset \mathcal J_i$.
  \item For any $F_1\in \mathcal J_1,\ldots, F_s\in \mathcal J_s$ we have $\sum_{i=1}^s\alpha_i|F_i\cap [\ell]|\ge \ell+q$ for some $\ell\in S\cap [j]$. Indeed, by Proposition~\ref{propbighelp} this inequality holds for any $F''_1\in \ff''_1,\ldots, F''_s\in \ff''_s$. But, by definition, $F_i\cap [j]=F''_i\cap [j]$ for some $F''_i\in \ff''_i$.
\end{itemize}
To conclude the proof that $\mathcal J_i$ are the desired juntas, we are only left to show that
\begin{prop}\label{prop12}
  We have $|\ff'_i|\le \big(\frac {k_i}n\big)^r{n\choose k_i}$.
\end{prop}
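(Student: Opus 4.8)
Since $\ff'_i\subseteq\binom{[n]}{k_i}$, the plan is to count all $k_i$-subsets $F$ of $[n]$ with the crossing property defining $\ff'_i$; equivalently, writing $p:=k_i/n$ and taking $F$ uniformly random in $\binom{[n]}{k_i}$, to show
$$\Pr\Big[\exists\,\ell>j:\ |F\cap[\ell]|\ge\tfrac{k_i}{\sigma}\ell\Big]\ \le\ p^r .$$
I would picture $F$ as the lattice walk $x\mapsto |F\cap[x]|$ from $(0,0)$ to $(n,k_i)$ and the constraint as that walk reaching the line $L:\,y=\tfrac{k_i}{\sigma}x$ for some $x>j$; note $L$ has slope $\tfrac{k_i}{\sigma}\le1$ (as $\sigma\ge k_i$), its endpoint $(n,k_i)$ lies strictly below $L$ (as $n\ge(1-\epsilon)^{-1}\sigma>\sigma$), and reaching $L$ at $x=\ell$ is impossible unless $\ell\le\sigma$ — so, discarding the trivial case $j\ge\sigma$, only $\ell\in(j,\sigma]$ matters, and at the last such $\ell$ the height $|F\cap[\ell]|=\lceil\tfrac{k_i}{\sigma}\ell\rceil$ is forced (since the two sides of $L$ at consecutive integers differ by $\tfrac{k_i}{\sigma}<1$).

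The naive finish — union-bound over $\ell\in(j,\sigma]$ and bound $\Pr[|F\cap[\ell]|\ge\tfrac{k_i}{\sigma}\ell]$ by a Chernoff-type tail — does \emph{not} work: the tail is roughly $e^{-c\ell}$ with $c=\tfrac{k_i}{\sigma}\,g(\sigma/n)$, $g(u):=u-1-\ln u$, a \emph{small} rate when $\tfrac{k_i}{\sigma}$ is small, while $\ell$ ranges over $\sigma-j$ values, so the sum can exceed $1$ (overcounting the true probability by a factor of order $\sigma/k_i$). The remedy is to count paths more carefully: decompose $F$ at the last $\ell>j$ at which the walk reaches $L$, write the prefix as an unrestricted walk $(0,0)\to(\ell,a)$ with $a=\lceil\tfrac{k_i}{\sigma}\ell\rceil$ (there are $\binom{\ell}{a}$ of them), and count the suffixes $(\ell,a)\to(n,k_i)$ that stay strictly below $L$ afterwards by a reflection/ballot/cycle-lemma estimate; such estimates gain precisely the relative gap by which the terminal point lies below $L$, i.e.\ a factor of order $\tfrac{k_i}{\sigma}\big(1-\tfrac{\sigma}{n}\big)$, which cancels the $\sigma/k_i$ blow-up. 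Summing $\sum_\ell\binom{\ell}{a}\binom{n-\ell}{k_i-a}=\binom{n}{k_i}\Pr[|F\cap[\ell]|=a]$ and inserting the hypergeometric tail bound — provable self-containedly from $\E[\theta^{|F\cap[\ell]|}]\le(1-p+p\theta)^{\ell}$ — should yield $|\ff'_i|\le C\epsilon^{-2}e^{-cj}\binom{n}{k_i}$ for an absolute constant $C$.

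It then remains to check $cj\ge r\ln\tfrac{n}{k_i}+O(\log\tfrac1\epsilon)$. I would use the two bounds $c\ge\tfrac{k_i\epsilon^2}{2\sigma}$ (valid for all $\sigma/n\le1-\epsilon$, from $g(1-\epsilon)\ge\epsilon^2/2$ and monotonicity of $g$ on $(0,1)$) and $c\ge\tfrac{k_i}{\sigma}\big(\ln\tfrac n\sigma-1\big)$, together with $j=\max\{A,B\}\ge\tfrac12(A+B)$, where $A=\tfrac{4\sigma}{\epsilon^2k_i}\big(r\log\tfrac{\sigma}{(1-\epsilon)k_i}+\log\tfrac8{\epsilon^2}\big)$ and $B=\tfrac{\sigma r}{\epsilon k_i}$: when $\ln\tfrac n\sigma$ is below an absolute constant, the first bound on $c$ with the term $A$ already contributes more than $r\log\tfrac{\sigma}{(1-\epsilon)k_i}$ and the $O(\log\tfrac1\epsilon)$ overhead, while $r\ln\tfrac n\sigma$ is itself $O(r)$ and absorbed too; when $\ln\tfrac n\sigma$ is large, the second bound on $c$ feeds into $A$ to cover the $r\log\tfrac{\sigma}{k_i}$ part and into $B$ (using $\epsilon\le\tfrac13$, so $\tfrac1\epsilon\ge3$, to beat the constant $1$ in $g(u)\ge\ln\tfrac n\sigma-1$) to cover $r\ln\tfrac n\sigma$. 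Finally, settings (ii) and (iii) come from running this argument with the slope $\tfrac1{\alpha_is}$ in place of $\tfrac{k_i}{\sigma}$ — equivalently, $\sigma$ replaced by $\alpha_isk_i$ — which is still admissible in Proposition~\ref{propbighelp} because $\sum_i\alpha_i\cdot\tfrac1{\alpha_is}=1$, and which coincides with (i) when all $\alpha_ik_i$ are equal.

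The step I expect to be genuinely delicate is the path count in the second paragraph: a plain union bound over the crossing height is honestly too lossy here (it need not even be below $1$), and it is the ballot/cycle-lemma gain — the relative gap of the walk's terminal point below $L$ — that makes the estimate close. The ensuing calculation with the two pieces of $j$ is then routine but somewhat lengthy case-checking.
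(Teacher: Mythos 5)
Your plan opens exactly as the paper's: decompose each $F\in\ff'_i$ at the \emph{last} $\ell>j$ with $|F\cap[\ell]|\ge k_i\ell/\sigma$, and note that the height there is forced to equal $\lceil k_i\ell/\sigma\rceil$. You then keep $\ell$ as the summation variable, accept a sum over $\approx\sigma$ values of $\ell$, and plan to recover the $\sigma/k_i$ overcount by a ballot/cycle-lemma bound on suffix walks that stay strictly below the line. That is precisely the step you yourself flag as ``genuinely delicate,'' and it really is: the prefix ends at a lattice point that is on or strictly \emph{above} the line (not on it), the slope $k_i/\sigma$ is in general irrational (the cycle lemma is clean only for lines through lattice points), and the gain is not simply the relative endpoint gap. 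You have not carried this out, and it is the crux of your argument as proposed.

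The paper makes that step unnecessary by reading the maximality in the direction you did not use. You observed that $\ell^*$ determines the height $t=\lceil k_i\ell^*/\sigma\rceil$; the crucial converse, which you missed, is that by maximality the height determines $\ell^*$: writing $t:=|F\cap[\ell^*]|$ and $m:=\lfloor\sigma t/k_i\rfloor$, if $\ell^*<m$ then $|F\cap[m]|\ge t\ge k_im/\sigma$, so $m>\ell^*$ would also qualify, a contradiction; hence $\ell^*=\lfloor\sigma t/k_i\rfloor$. Thus the last-crossing position ranges over only $\approx k_i$ values, one per integer $t\in[k_ij/\sigma,k_i]$, so the index set is already the right size. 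With that, the \emph{unconstrained} count $\binom{g(t)}{t}\binom{n-g(t)}{k_i-t}$ (with $g(t)=\lfloor\sigma t/k_i\rfloor$) for each class suffices — no stay-below condition on the suffix is needed. Dividing by $\binom{n}{k_i}$ turns each term into a hypergeometric upper tail $\Pr[X\ge t]$ with $\E X\le(1-\eps)t$, which is at most $e^{-\eps^2t/4}$ by moment-generating-function domination by a binomial plus Chernoff (this part does parallel your self-contained $\E[\theta^{|F\cap[\ell]|}]\le(1-p+p\theta)^\ell$ remark); the geometric sum over $t\ge k_ij/\sigma$ gives $\frac{8}{\eps^2}e^{-\eps^2k_ij/(4\sigma)}$, and the stated $j$ closes it directly. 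In short, your diagnosis that the plain union bound over $\ell$ loses a factor $\sigma/k_i$ is correct, and both arguments aim at the same loss, but the paper removes it by shrinking the index set rather than by sharpening each term; once you see that $\ell^*$ has only $\approx k_i$ admissible values, the ballot apparatus and the ensuing ``lengthy case-checking'' both evaporate.
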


\begin{proof}
Partition $F\in\ff'_i$ into classes by the maximum value of $\ell$, for which $|F\cap [\ell]|\ge k_i\ell/\sigma$. Obviously, $\ell$ must have the form $g(t):=\lfloor \sigma t/k_i\rfloor$ for integer $t\in [ k_ij/\sigma,k_i]$. Since it does not affect the calculations and simplifies the presentation, in what follows we shall treat $\sigma t/k_i$ for each $t$ as an integer (and thus assume that $g(t) = \sigma t/k_i$). Thus, we have
\begin{equation}\label{eqboundfi}|\ff'_i|\le \sum_{t= k_i j/\sigma}^{k_i}{g(t)\choose t}{n-g(t)\choose k_i-t} = \sum_{t = k_ij/\sigma}^{k_i}h\big(t,n,k_i,g(t)\big),\end{equation}
where $h(x,n,k,T) = \Pr[X = x]$ for $X\sim H(n,k,T)$, the hypergeometric distribution $H(n,k,T)$. Recall that $H(n,k,T)$ is the distribution of the size of the intersection of a uniformly random $k$-element subset of $[n]$ with a fixed set of size $T$. Take a random variable $X\sim H\big(n,k_i,g(t)\big)$. Then the expectation $\E X$ of $X$ satisfies $\E X = \frac {g(t)k_i}n = \frac {\sigma t}n\le (1-\eps)t$ due to our choice of $n$. On the one hand, \begin{equation}\label{eqbound3} h(t,n,k_i,g(t))\le \Pr[X\ge t].\end{equation}  On the other hand, it is known (see, e.g., \cite[Theorem 2.10]{Luc}) that \begin{equation}\label{eqmom}\E e^{uX}\le \E e^{uY}\end{equation} for any real $u$ and $Y\sim Bin(g(t),k_i/n)$ (note that $\E Y = \E X\le (1-\eps)t$).

Recall the following Chernoff-type inequality for $Y'\sim Bin(m,p)$ and 
$a>0$ (cf., e.g., \cite[Theorem A.1.11]{AS}):
$$\Pr[Y'-\E Y'\ge a]<e^{-\frac{a^2}{2pm}+\frac {a^3}{2(pm)^2}}.$$
This inequality is proved using the moment generating function, and thus applies to $X$ as well due to \eqref{eqmom}.
Recall that $\epsilon\le 1/3$ and take $Y'\sim Bin(g(t), p)$, where $p\ge \frac{k_i}n$ is such that $\E Y' = (1-\eps) t$. We get that
{\small $$\Pr[X\ge t]\le  \Pr[Y\ge t]\le \Pr\big[Y'-\E Y'\ge \epsilon t\big]<\exp\Big[-\frac {\epsilon^2 t}{2(1-\epsilon)}+ \frac {\epsilon^3 t}{2(1-\epsilon)^2 }\Big]<\exp\Big[-\frac {\epsilon^2 t}{4(1-\epsilon)}\Big]<e^{-\epsilon^2t/4}.$$}

Using the displayed bound above and \eqref{eqbound3}, we get that
$$\sum_{t=k_ij/\sigma}^{k_i} h\big(t,n,k_i,g(t)\big)\le \sum_{t=k_ij/\sigma}^{\infty}e^{-\epsilon^2t/4}\le e^{-\epsilon^2 k_i j/(4\sigma)}\cdot \frac 1{1-e^{-\epsilon^2/4}}\le e^{-\epsilon^2 k_i j/(4\sigma)}\cdot \frac 8{\epsilon^2},$$
where the last bound is due to the fact $e^{-x}\le 1-x/2$ for $x<1$.
Due to our choice of $j$, we get that
$$\sum_{t=k_ij/\sigma}^{k_i} f_t\le e^{-r\log \frac {k_i}{(1-\epsilon)\sigma}-\log\frac 8{\epsilon^2}}\cdot \frac 8{\epsilon^2} = \Big(\frac {(1-\epsilon)k_i}{\sigma}\Big)^r,$$
and therefore $\sum_{t=k_ij/\sigma}^{k_i} f_t\cdot (n/k_i)^r\le 1$ for $n = (1-\epsilon)^{-1} \sigma$. As we have already mentioned, this implies that the same inequality holds for $f_t(n)$ for any $n\ge (1-\epsilon)^{-1} \sigma$. Combining everything together, we have $|\ff'_i|/{n\choose k_i}\le  \sum_{t=k_ij/\sigma}^k f_t(n)\le (k_i/n)^r$. This concludes the proof of the proposition.
\end{proof}
\vskip+0.1cm

The second part of Theorem~\ref{thmshiftjunta1} is proven analogously, with the only difference that we define $\ff_i', \ff_i''$ as follows:
\begin{equation*}\ff'_i:=\big\{F\in \ff_i: |F\cap [\ell]|\ge \ell\cdot \frac{s}{\alpha_i}\text{ for some }\ell>j\big\},\ \ \ \ \ff''_i:=\ff_i\setminus \ff'_i,\end{equation*}
and $g'_i(t):= \alpha_ist$ (again assumed to be integer) plays the role of $g(t)$.
\vskip+0.1cm

For part (iii) of Theorem~\ref{thmshiftjunta1}, we do a slightly different estimate in the proof of Proposition~\ref{prop12}.  Put $t_0:=j/(\alpha_is)$ and note that $t_0\ge r\log_C(Ce\alpha_is)=r+r\log(e\alpha_is)$ by our choice of $j$. We have \begin{equation}\label{eqboundfi2}|\ff'_i|\le \sum_{t= t_0+1}^{k_i}{g'(t)\choose t}{n-g'(t)\choose k_i-t}. \end{equation} Note that if  $g'(t)<t$ then the corresponding term in the sum is zero. Thus, in what follows we assume that $g'(t)\ge t$.
We show that $|\ff_i'|\le \big(\frac {k_i}n\big)^r{n\choose k_i}$. 
Indeed, \begin{equation}\label{eqcompare}\Big(\frac {k_i}n\Big)^r{n\choose k_i}/{n-g'(t)\choose k_i-t}\ge \Big(\frac{n}{k_i}\Big)^{t-r}\ge (C e\alpha_i s)^{t-r}.\end{equation}
   On the other hand,
   $${g'(t)\choose t}\le \Big(\frac{eg'(t)}{t}\Big)^{t}\le (e\alpha_is)^{t}.$$
 Combining the two bounds above, we obtain
 \begin{equation*}\frac{|\ff_i'|}{\big(\frac {k_i}n\big)^r{n\choose k_i}}\ \le  \sum_{t=t_0+1}^{k_i}\frac{(e\alpha_is)^{t}}{(Ce\alpha_i s)^{t-r}}= \sum_{t=t_0+1}^{k_i}\frac{(e\alpha_is)^{r}}{C^{t-r}} \le \frac{(e\alpha_is)^{r}}{C^{t_0-r}} \le \frac{(e\alpha_is)^{r}}{C^{r\log_C(e\alpha_is)}}= 1.\end{equation*}

\subsection{Theorem~\ref{thmshiftjunta2}}
The first part of the proof repeats word for word the proof of Theorem~\ref{thmshiftjunta1}, with an obvious replacement of ${[n]\choose k}$ by $2^{[n]}$ in the definition of $\mathcal J_i$. The only difference is that we need another version of Proposition~\ref{propbighelp}:
\begin{prop}
  We have $\mu_p(\ff'_i)\le p_i^r$.
\end{prop}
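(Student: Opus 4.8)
The plan is to run the proof of Proposition~\ref{prop12} essentially verbatim, the one structural simplification being that in the $p$-biased world the ``local'' quantities that appear are binomial probabilities rather than hypergeometric ones, so the passage through the moment inequality \eqref{eqmom} (equivalently, the reflection-principle bookkeeping) is simply not needed. Explicitly, set $\sigma:=\sum_{j=1}^s\alpha_jp_j$, let
$$\ff'_i:=\big\{F\in\ff_i:\ |F\cap[\ell]|\ge \ell\,p_i/\sigma\ \text{for some}\ \ell>j\big\},\qquad \ff''_i:=\ff_i\setminus\ff'_i,$$
and $\mathcal J_i:=\{G\subset[n]:\ G\cap[j]=F\cap[j]\ \text{for some}\ F\in\ff''_i\}$. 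That each $\mathcal J_i$ is a $j$-junta with center in $[j]$, that $\ff''_i\subset\mathcal J_i$, and that every choice $F_i\in\mathcal J_i$ satisfies \eqref{eqjunta1} with some $\ell\in S\cap[j]$ follows word for word from the argument of Proposition~\ref{propbighelp}, which never used uniformity. Hence $\ff_i\setminus\mathcal J_i\subset\ff'_i$, and it remains to bound $\mu_{p_i}(\ff'_i)$.

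First I would split $\ff'_i$ into classes according to the largest $\ell>j$ witnessing membership; as in Proposition~\ref{prop12} this $\ell$ equals $g(t):=\lfloor\sigma t/p_i\rfloor$ (treated, as there, as $\sigma t/p_i$), where $t:=|F\cap[g(t)]|$, and $g(t)>j$ forces $t>jp_i/\sigma$. The key point is that $\mu_{p_i}$ is a product measure, so for every $m$ and every fixed $X\subset[m]$ one has $\mu_{p_i}(\{F:\ F\cap[m]=X\})=p_i^{|X|}(1-p_i)^{m-|X|}$, whence $\mu_{p_i}(\{F:\ |F\cap[m]|=t\})=\binom mt p_i^t(1-p_i)^{m-t}=\Pr[\mathrm{Bin}(m,p_i)=t]$ exactly; this is precisely the place where the uniform proof had to replace a hypergeometric variable by $\mathrm{Bin}(g(t),k_i/n)$ via \eqref{eqmom}. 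Therefore
$$\mu_{p_i}(\ff'_i)\ \le\ \sum_{t\ge jp_i/\sigma}\Pr[Y_t=t]\ \le\ \sum_{t\ge jp_i/\sigma}\Pr[Y_t\ge t],\qquad Y_t\sim\mathrm{Bin}\big(g(t),p_i\big).$$

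Since $\E Y_t=g(t)p_i=\sigma t\le(1-\epsilon)t$ by the hypothesis of part (i) (the $p$-biased counterpart of $n\ge(1-\epsilon)^{-1}\sigma$), I would then apply exactly the Chernoff estimate used in Proposition~\ref{prop12} — inflate $p_i$ to $p'\in[p_i,1]$ with $g(t)p'=(1-\epsilon)t$, pass to $Y'_t\sim\mathrm{Bin}(g(t),p')$ by stochastic domination, and get $\Pr[Y_t\ge t]<e^{-\epsilon^2t/4}$ for $\epsilon\le 1/3$ — and sum the resulting geometric series, using $1-e^{-x}\ge x/2$ on $(0,1)$, to obtain $\mu_{p_i}(\ff'_i)\le e^{-\epsilon^2jp_i/(4\sigma)}\cdot 8/\epsilon^2$; the stated choice of $j$ in part (i) makes this at most $p_i^r$. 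Parts (ii) and (iii) need only the same substitutions as in Theorem~\ref{thmshiftjunta1}: for (ii) use the threshold $\ell/(\alpha_is)$ and $g'(t)=\alpha_ist$; for (iii) replace the Chernoff step by the crude bounds $\binom{g'(t)}t\le(e\alpha_is)^t$ and $(1-p_i)^{g'(t)-t}\le1$, giving $\mu_{p_i}(\{|F\cap[g'(t)]|=t\})\le(e\alpha_isp_i)^t$, and control the geometric sum via $Ce\alpha_isp_i\le1$ together with $t_0:=j/(\alpha_is)\ge r\log_C(Ce\alpha_is)$. The only thing demanding care is purely arithmetic — arranging the constants so the geometric sums collapse to exactly $p_i^r$ rather than a constant multiple of it (starting the summation index at $t_0+1$, exactly as in the uniform argument) — and there is no new idea beyond Theorem~\ref{thmshiftjunta1}; indeed, as the authors remark, the calculation is strictly easier here because the hypergeometric-to-binomial passage has become vacuous.
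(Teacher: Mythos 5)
Your proof matches the paper's argument essentially verbatim: decompose $\ff'_i$ by the maximal witnessing $\ell=g(t)$, observe that the $\mu_{p_i}$-measure of each class is exactly a binomial probability $\Pr[\mathrm{Bin}(g(t),p_i)=t]$ (so the hypergeometric-to-binomial moment inequality \eqref{eqmom} is no longer needed), bound it by a Chernoff tail after inflating $p_i$ to $p'$ with mean $(1-\epsilon)t$, and sum the geometric series. You also correctly pinpoint the one structural simplification over Proposition~\ref{prop12}, which is precisely what the authors mean by ``the calculations become easier''; the substitutions for parts (ii) and (iii) are likewise the ones the paper intends.
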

\begin{proof}
  With the same definition of $g(t)$ (and the same convention concerning omitting integer parts), we get that

\begin{equation*}\mu_p(\ff'_i)\le \sum_{t= p_ij/\sigma}^{n}{g(t)\choose t}p_i^t(1-p_i)^{g(t)-t}.\end{equation*}
An individual term of this sum is $\Pr[Y= t]$, where $Y\sim Bin(g(t),p_i)$. It is at most $ \Pr\big[Y'-\E Y'\ge \epsilon t\big]$, where $Y'\sim Bin(g(t),p)$ and $p\ge p_i$ is chosen so that $\E Y' = (1-\eps)t$. The remainder of the proof is identical to that of part (i) of Theorem~\ref{thmshiftjunta1}. 

\vskip+0.1cm

The proof goes similarly for the second and the third part of the theorem.
\end{proof}

\section{Families with no cross-matching}\label{sec5}
As an application of Theorem~\ref{thmcrossunion}, we prove an exact bound concerning cross-dependent families. The following question was addressed by Aharoni and Howard \cite{AH}, as well as by Huang, Loh and Sudakov \cite{HLS}. Given $\ff_1\ldots, \ff_s\subset {[n]\choose k}$ that are cross-dependent,\footnote{Recall that cross-dependence means that there are no sets $F_1\in \ff_1,\ldots, F_s\in \ff_s$ that are pairwise disjoint.}
 find $\min_{i\in[s]}|\ff_i|$. (We note here that some authors use the term ``$\ff_1,\ldots, \ff_s$ contain a rainbow matching'' to refer to the situation, opposite to ``cross-dependence''.) In \cite{HLS}, Huang, Loh and Sudakov proved the following result.
\begin{thm}\label{thmhls}
  If $n>3sk^2$ and $\ff_1,\ldots, \ff_s\subset {[n]\choose k}$ are cross-dependent then
  \begin{equation}\label{eqcrossdep}
    \min_{i\in[s]}|\ff_i|\le {n\choose k}-{n-s+1\choose k}.
  \end{equation}
\end{thm}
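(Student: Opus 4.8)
The plan is to deduce Theorem~\ref{thmhls} from the junta approximation of Theorem~\ref{thmcrossunion} together with an exact extremal bound for cross‑dependent juntas. First, since performing shifts preserves both cross‑dependence and the cardinality $|\ff_i|$ of each family (cf.\ Section~\ref{sec32} and \cite{Fra3}), we may assume all $\ff_i$ are shifted. The hypothesis $n>3sk^2$ in particular gives $n\ge(1-\eps)^{-1}sk$ for $\eps=1/3$ (indeed $3sk^2\ge 3sk\ge\tfrac32 sk$), so Theorem~\ref{thmcrossunion}(ii) applies with $k_1=\dots=k_s=k$ and $q=1$. Choosing the free parameter $r$ large — say $r=2k$, which exceeds $\log\binom nk/\log(n/k)$ since $n/k>6$ forces $\log\binom nk\le k(1+\log(n/k))<2k\log(n/k)$ — the error term becomes $(k/n)^r\binom nk<1$, hence $\ff_i\subseteq\mathcal J_i$ for cross‑dependent juntas $\mathcal J_1,\dots,\mathcal J_s$ with a common center $[j]$, $j=O(ks\log s)$. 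Thus it suffices to show $\min_i|\mathcal J_i|\le\binom nk-\binom{n-s+1}k$. (This requires $j<n$, which holds by $n>3sk^2$ as soon as $k\gtrsim\log s$; in the residual range of very small $k$ one has $k=O(\log s)$ while $n=\Omega(s)$, a bounded‑uniformity regime that can be handled by running the combinatorial argument below directly on the original $\ff_i$, or by classical matching estimates.)

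Next I would identify what ``cross‑dependent junta'' means. Writing $\mathcal J_i^*\subseteq 2^{[j]}$ for the defining families, the $\mathcal J_i$ are cross‑dependent if and only if $\mathcal J_1^*,\dots,\mathcal J_s^*$ admit no pairwise disjoint transversal $X_i\in\mathcal J_i^*$: in one direction such $X_i$ extend to pairwise disjoint $k$‑sets because $n-j\ge sk$ in our range, and in the other one intersects the members of a rainbow matching with $[j]$. Since simultaneously shifting a collection of families preserves cross‑dependence and the number $\binom{n-j}{k-|X|}$ of extensions of $X$ to a $k$‑set depends only on $|X|$, we may also assume each $\mathcal J_i^*$ is shifted on $[j]$. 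So the task reduces to the following sharp statement: for shifted $\mathcal J_i^*\subseteq 2^{[j]}$ with no disjoint transversal, $\min_i\sum_{X\in\mathcal J_i^*}\binom{n-j}{k-|X|}\le\binom nk-\binom{n-s+1}k$, with equality for $\mathcal J_i^*=\{X\subseteq[j]:X\cap[s-1]\ne\emptyset\}$ (note that the right‑hand side is exactly $|\{F\in\binom{[n]}k:F\cap[j]\in\{X:X\cap[s-1]\ne\emptyset\}\}|$). I would prove this by induction on $s$: if every weighted size exceeded $\binom nk-\binom{n-s+1}k$, then shiftedness forces each $\mathcal J_i^*$ to contain the colex‑minimal set of size $\le k$ that avoids $[s-1]$; cross‑dependence then pins down the remaining families, one passes to $s-1$ families on a smaller ground set, and the inductive bound is contradicted.

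The main obstacle is precisely this inductive kernel step: the junta reduction does not by itself trivialise it, because a junta may exploit coordinates outside $[s-1]$, and one must show it cannot do so profitably. Already the base case $s=2$ is nontrivial — it asserts that cross‑intersecting $\aaa,\bb\subseteq\binom{[n]}k$ with $n\ge 2k$ satisfy $\min(|\aaa|,|\bb|)\le\binom{n-1}{k-1}$, which is strictly stronger than the bound $\binom n{k-1}$ obtainable from Lemma~\ref{lemcross}, and whose sharp form genuinely needs the shifting/kernel analysis (iterating the step ``a shifted family not contained in the star contains a fixed small set, which by the cross‑intersecting property deletes a large part of the partner family'') rather than the reflection principle alone. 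A secondary, more routine obstacle is the bookkeeping for the small‑$k$ range, where the junta center can be of size comparable to $n$ and the argument must be carried out on the original families directly.
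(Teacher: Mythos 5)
The paper does not prove Theorem~\ref{thmhls} at all --- it is quoted verbatim from Huang, Loh and Sudakov \cite{HLS}. What the paper proves is Theorem~\ref{thmcrosscross}, the same conclusion under $n\ge 12ks\log(e^2s)$; the two ranges are incomparable (the paper's threshold is lower when $k\gg\log s$, but higher when $k\lesssim \log s$), so the junta machinery does not subsume the $n>3sk^2$ statement, and any plan of the kind you propose has to confront the small-$k$ regime head on. Measured against the paper's proof of Theorem~\ref{thmcrosscross}, your plan diverges in two ways. First, the paper takes a fixed small $r$ (namely $r=3$), accepts a nonzero error $|\ff_i\setminus\mathcal J_i|\le (k/n)^3\binom nk$, and removes it at the end with a separate ``bootstrap'' (Bollob\'as--Thomason/Kruskal--Katona plus a random ordered-matching expectation count) to get $\ff_i=\mathcal J_i$. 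Your choice $r\approx 2k$ to force $\ff_i\subseteq\mathcal J_i$ outright eliminates the bootstrap, but it inflates the center to $j=\Theta(ks\log s)$, which exceeds $n$ precisely when $k\lesssim\log s$; your proposed repair for that range (``run the combinatorial argument directly on the original $\ff_i$, or classical matching estimates'') is not an argument, since the combinatorial step only works because the families are juntas with a small center.

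The more fundamental gap, which you yourself flag as ``the main obstacle,'' is the kernel step: showing that cross-dependent shifted juntas on $[j]$ whose weighted sizes all exceed $\binom nk-\binom{n-s+1}k$ must all equal the star $\{X\subseteq[j]:X\cap[s-1]\ne\emptyset\}$. You propose an induction on $s$, but no working inductive mechanism is given --- the pivotal assertion ``shiftedness forces each $\mathcal J_i^*$ to contain the colex-minimal set of size $\le k$ that avoids $[s-1]$'' is not established, and (as you note) even the base case $s=2$ is a nontrivial cross-intersecting theorem that does not follow from Lemma~\ref{lemcross}. The paper does not induct at this point; it imports Lemma~\ref{lemhls} from \cite{HLS} and runs a direct argument built around a maximal bipartite matching between the singletons of $[s-1]$ and the defining families $\mathcal J_i^*$, together with a weighted counting claim. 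Until the kernel step is supplied by that route or another, the proposal is an outline rather than a proof.
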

Later, a version of this theorem for families with different uniformity was  obtained by Lu and Yu \cite{LY}. They have also managed to obtain an inequality of the product of sizes of cross-dependent families, partially answering one of the questions of \cite{HLS}. In \cite{HLW}, Huang, Li and Wang obtained a similar result for a stronger notion of a rainbow matching.

As it is easily guessed from the formula, the bound is attained by the families $\ff_1 = \ldots = \ff_s = \{F\in {[n]\choose k}: F\cap [s-1]\ne \emptyset\}$.

The bound \eqref{eqcrossdep} was obtained for $n>f(s) k$ with some unspecified and very fast growing function $f(s)$ by Keller and Lifshitz in \cite{KLchv} as an application of the junta method. We recommend the reader to consult \cite{KLchv} for more advanced applications of the junta method. The proof of the next theorem follows the framework proposed by the authors of \cite{KLchv}. Our junta approximation gives an almost linear bound.

\begin{thm}\label{thmcrosscross}
The statement of Theorem~\ref{thmhls} holds for $n\ge 12 ks\log (e^2s)$.
\end{thm}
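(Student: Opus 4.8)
The plan is to combine the junta approximation from Theorem~\ref{thmcrossunion} with a careful analysis of the resulting juntas, following the framework of Keller--Lifshitz~\cite{KLchv}. First I would reduce to the shifted case: since cross-dependence is preserved by shifting and shifting does not change the sizes of the families, we may assume $\ff_1,\ldots,\ff_s$ are shifted. Cross-dependence is exactly the case $q=1$ of the cross-union condition, so Theorem~\ref{thmcrossunion} applies. I would invoke part (iii) with $C=e$ (so that the constraint $n\ge Ce\cdot sk = e^2 sk$ is comfortably implied by $n\ge 12ks\log(e^2s)$), and choose $r$ roughly of order $\log_C(Ces) = 1+\log s$ — more precisely $r$ tuned so that $rs\log_C(Ces)\le j$ for a small explicit $j$ (linear in $s$) while simultaneously $(k/n)^r\binom nk = (k/n)^r\binom nk$ is small compared to $\binom{n-1}{k-1}$, say $|\ff_i\setminus\mathcal J_i| < \frac 1s\binom{n-1}{k-1}$ or so. The constant $12$ and the $\log(e^2s)$ factor in the hypothesis are exactly what is needed to make both inequalities hold at once; this bookkeeping is the first routine-but-delicate step.

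Next I would analyze the cross-dependent juntas $\mathcal J_1,\ldots,\mathcal J_s$ on common center $[j]$. The point is that cross-dependence of the juntas is a statement purely about the defining families $\mathcal J_i^*\subset 2^{[j]}$: there is no choice of $X_1\in\mathcal J_1^*,\ldots,X_s\in\mathcal J_s^*$ that are pairwise disjoint. In particular each $\mathcal J_i^*$ cannot contain $\emptyset$ unless it forces all others to be very restricted, and one shows — using that each $\mathcal J_i$ still contains a large chunk of the shifted family $\ff_i$ and hence $\mathcal J_i^*$ is nonempty and "upward-ish" in the shifted sense — that in fact $\mathcal J_i^* \ne \emptyset$, i.e. each $\mathcal J_i$ is a nonempty junta with center in $[j]$, $j$ of order $s\log s$. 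Among the minimal nonempty sets across all $\mathcal J_i^*$, pick one of minimum size; by a pigeonhole / sunflower-free type argument on $s$ pairwise-disjoint-avoiding families one deduces that some $\mathcal J_i^*$ contains a singleton, and then (again using shiftedness, which makes $\mathcal J_i^*$ downward-closed-ish relative to $[j]$) one can peel off coordinates to conclude that for the index $i$ realizing $\min_i|\ff_i|$, the junta $\mathcal J_i$ has defining family contained in $\{X\subset[j]: X\cap[s-1]\ne\emptyset\}$, i.e. $\mathcal J_i\subset\{F: F\cap[s-1]\ne\emptyset\}$. This structural step is where I expect the real work to be, and it is the main obstacle: translating the combinatorial cross-dependence of the small juntas into the precise extremal configuration $F\cap[s-1]\ne\emptyset$.

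Finally I would finish by counting. For the index $i$ with $|\ff_i| = \min_j|\ff_j|$, write $\ff_i \subset \mathcal J_i \cup (\ff_i\setminus\mathcal J_i)$, so
\[
|\ff_i| \le |\mathcal J_i| + |\ff_i\setminus\mathcal J_i| \le \Big(\binom nk - \binom{n-s+1}{k}\Big) + |\ff_i\setminus\mathcal J_i|,
\]
using the structural conclusion $\mathcal J_i\subset\{F:F\cap[s-1]\ne\emptyset\}$ for the first bracket. Since $|\ff_i\setminus\mathcal J_i|$ is strictly smaller than the "gap" between consecutive possible values — more precisely, one shows $|\ff_i\setminus\mathcal J_i| < \binom{n-s+1}{k-1} \le \binom{n}{k}-\binom{n-s+1}{k} - \big(\binom nk - \binom{n-s+1}{k}\big)$ is not quite right, so instead one argues directly that the true extremal bound cannot be exceeded — the cleanest route is a compression/stability argument showing that if $|\ff_i|$ exceeded $\binom nk - \binom{n-s+1}{k}$ then $\ff_i$ would have to spill out of $\mathcal J_i$ by more than the approximation allows, a contradiction. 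Thus $\min_i|\ff_i| = |\ff_i| \le \binom nk - \binom{n-s+1}{k}$, which is \eqref{eqcrossdep}. The only genuinely new ingredient beyond the cited junta theorem is the structural analysis of the tiny juntas, so that is where I would concentrate the effort; everything else is a matching of constants against $n\ge 12ks\log(e^2s)$.
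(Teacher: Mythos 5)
Your framework is the right one and matches the paper's: shift, invoke Theorem~\ref{thmcrossunion}(iii) with $C=e$ (the paper takes $r=3$, giving $j=3s\log(e^2s)$ and $|\ff_i\setminus\mathcal J_i|\le (k/n)^3\binom nk$), then analyze the small cross-dependent juntas. But both of your key steps have genuine gaps.

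For the structural step, you aim only to show that for the minimizing index $i$ the junta satisfies $\mathcal J_i\subset\{F:F\cap[s-1]\ne\emptyset\}$. That is not enough: even with that inclusion, $|\ff_i|\le|\mathcal J_i|+|\ff_i\setminus\mathcal J_i|$ can exceed $\binom nk-\binom{n-s+1}{k}$ because of the additive error $|\ff_i\setminus\mathcal J_i|>0$. The paper proves the stronger statement that under the contradiction hypothesis \emph{all} juntas equal $\{F:F\cap[s-1]\ne\emptyset\}$, and the argument is not the vague ``pigeonhole / sunflower'' one you sketch. It proceeds by taking a maximal matching of singletons to defining families $\mathcal J_i^*$, passing to the unmatched families $\mathcal I_i:=\mathcal J_i(\emptyset,[s-l])$, stratifying $\mathcal I_i^*$ by set size, and applying the Huang--Loh--Sudakov lemma (Lemma~\ref{lemhls}) to conclude that one $|\mathcal I_i|$ is too small — a concrete quantitative argument you do not have.

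More importantly, your second step is wrong in conception. You say the contradiction would come from $\ff_i$ ``spilling out of $\mathcal J_i$ by more than the approximation allows''; the junta theorem already guarantees the spillover is small, and that is not where the tension lies. The actual difficulty is that $\ff_i$ may contain a small set of members outside $\mathcal J_i$ (i.e.\ disjoint from $[s-1]$), and those need not violate cross-dependence among the $\ff_i$'s directly — they only force some \emph{other} $\ff_{i'}$ to be deficient. The paper handles this with a separate argument: use the Bollob\'as--Thomason form of Kruskal--Katona to boost the density $\alpha_1:=|\ff_1(\emptyset,[s-1])|/\binom{n-s+1}{k}$ on a higher level, then take a random ordered matching of $s-1$ $k$-sets plus one $(n+k)/2$-set to get the averaging inequality $\alpha'_1+\sum_{i}\beta_{\pi(i)}^i\le s-1$, and conclude that some $\ff_i$ is smaller than $\binom nk-\binom{n-s+1}{k}$ by at least $\frac{k^{3/2}}{n^{3/2}}\binom{n-s+1}{k-1}-\frac{k^3}{n^3}\binom nk>0$. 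Nothing in your proposal supplies this second mechanism, so the proof is incomplete as written.
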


\begin{proof} Since shifting maintains cross-dependence (cf. \cite{Fra3}), we may assume that $\ff_i$, $i\in [s]$, are shifted. Let us apply Theorem~\ref{thmcrossunion} part (iii) with $q=1$, $k_1 = \ldots = k_s = k$, $C=e$ and $r=3$.
We get cross-dependent $j$-juntas $\mathcal J_1,\ldots, \mathcal J_s$ with $j=3s\log (e^2s)$ and such that $|\mathcal J_i\setminus \ff_i|\le \frac {k^3}{n^3}{n\choose k}$. In particular, if $\min_{i\in[s]}|\ff_i|\ge {n\choose k}-{n-s+1\choose k}$, then  $\min_{i\in[s]}|\mathcal J_i|\ge {n\choose k}-{n-s+1\choose k}-\frac {k^3}{n^3}{n\choose k}$.

The proof consists of two steps. The first step is to show that the juntas must have the structure of the (conjectured) extremal family. We will need the following lemma proved by Huang, Loh and Sudakov \cite{HLS}.
    \begin{lem}\label{lemhls} Let $\G_1\subset {X\choose t_1},\ldots, \G_l\subset {X\choose t_l}$ be cross-dependent and $\sum_{i=1}^l t_i\le |X|$. Then there exists $i$ such that $|\G_i|\le (l-1){|X|-1\choose t_i-1}$.
       \end{lem}

 Recall that we use the following notation: for any family $\mathcal G\subset 2^{[m]}$ and $X\subset Y\subset [m]$, put $$\mathcal G(X,Y):=\big\{G\setminus X: G\in \G, G\cap Y = X\big\}.$$

  \begin{lem}
    If $\min_{i\in [s]} |\mathcal J_i|\ge {n\choose k}-{n-s+1\choose k}-\frac {k^3}{n^3}{n\choose k}$, then we have $\mathcal J_1 = \ldots  = \mathcal J_s = \big\{F\in {[n]\choose k}: F\cap [s-1]\ne \emptyset\big\}.$
  \end{lem}

  \begin{proof}
 Consider the defining family $\mathcal J^*_i$ of $\mathcal J_i$.
    If $\mathcal J^*_i$ contains $l$ singletons for some $l\in [s-1]$, then these singletons must be $1,\ldots, l$ due to shiftedness. Clearly, if each $\mathcal J^*_i$ contain $s-1$ singletons, then they cannot contain any sets disjoint to $[s-1]$, and the claim is proved.

    W.l.o.g., assume that $\mathcal J^*_1$ does not contain $\{s-1\}$. Consider a bipartite graph $G$ between $\{1\},\ldots, \{s-1\}$ and $\mathcal J^*_2,\ldots, \mathcal J^*_s$, where an edge between a singleton and a family is drawn if and only if the singleton belongs to the family. Take a maximal (non-extendable) matching in this graph, such that the singletons cover the initial segment $[s-l]$, where $l\ge 1$ is an integer. W.l.o.g., assume the families that are not matched are $\mathcal J^*_1,\ldots, \mathcal J^*_l$ (note that $\mathcal J^*_1$ is not matched since it was not included in the graph). Consider the families
    $\mathcal I^*_i:=\mathcal J^*_i(\emptyset,[s-l])$ and $\mathcal I_i:=\mathcal J_i(\emptyset,[s-l])$, $i=1,\ldots,l$. The former are the families on $2^{X}$ for $X:=[s-l+1,j]$ and none of these families contain a singleton due to the maximality of the chosen matching.

    If $l = 1$ then $\mathcal J^*_1(\emptyset,[s-2])$ can only contain sets containing $s-1$ (otherwise, $\mathcal J_1,\ldots,\mathcal J_s$ are not cross-dependent).  Thus $|\mathcal J_1|\le {n\choose k}-{n-s+1\choose k}-{n-j\choose k-1}<{n\choose k}-{n-s+1\choose k}-\frac {k^3}{n^3}{n\choose k}$, where the last inequality is valid for $n>2kj$.

    From now on, we assume that $l\ge 2$.
    Our next goal is to prove that \begin{equation}\label{eqkeystuff} \text{one of }\mathcal I_1,\ldots,\mathcal I_l\text{ has size smaller than } {n-s+l\choose k}-{n-s+1\choose k}-\frac {k^3}{n^3}{n\choose k}.\end{equation}
     Since $|\mathcal J_i\setminus \mathcal I_i|\le {n\choose k}-{n-s+l\choose k}$, this and $|\mathcal J_i|=|\mathcal J_i\setminus \mathcal I_i|+|\mathcal I_i|$ would lead to a contradiction.

    For each $t=2,\ldots, k$, consider
    $$\mathcal I^{(t)}_i:=\mathcal I_i^*\cap {X\choose t}.$$

    The families $\mathcal I^{(t_1)}_1,\ldots, \mathcal I^{(t_l)}_l$ are cross-dependent for any $t_1\ldots, t_l$.
    The following claim is crucial in bounding the size of $\mathcal I_i$.
    \begin{cla}
      For any $\beta_2,\ldots, \beta_k\ge 0,$ there is $i$ such that $\sum_{t=2}^k\beta_t|\mathcal I_i^{(t)}|\le \sum_{t=2}^k\beta_tl{j-1\choose t-1}$.
    \end{cla}
    \begin{proof}
    Assume that this does not hold. Then, for each $i\in [l]$, there is $t_i$ such that $|\mathcal I_i^{(t_i)}|\ge l{j-1\choose t_i-1}>l{|X|-1\choose t_i-1}$. Since $\mathcal I_i^{(t_i)}\subset {X\choose t_i}$, we have $|\mathcal I_i^{(t_i)}|\le {|X|\choose t_i}$. Together, these two inequalities imply $t_i\le |X|/l$ and thus $\sum_{i=1}^l t_i\le |X|$. Applying Lemma~\ref{lemhls}, we get a contradiction with the fact that $\mathcal I_1^{(t_1)},\ldots,\mathcal I_l^{(t_l)}$ are cross-dependent.
    \end{proof}

Let us put $\beta_t := {n-j\choose k-t}$ for every $t=2,\ldots, k$ and apply the claim. Then there is $i\in [l]$, such that \begin{equation}\label{finalsum}\sum_{t=2}^k{n-j\choose k-t}|\mathcal I^{(t)}_i|\le \sum_{t=2}^k {n-j\choose k-t} l {j-1\choose t-1}.\end{equation} The left hand side of \eqref{finalsum} is exactly $|\mathcal I_i|$! Finally, we are left to estimate the right hand side of \eqref{finalsum}. Denote the $t$-th term of the summation by $f(t)$. Recall that $n\ge 4jk$, and, due to $k\ge 2$ and $j\ge 2$, we have $n-j-k\ge 3jk$. We have
$$\frac{f(t)}{f(t+1)} = \frac{n-j-k+t+1}{k-t}\cdot \frac {t}{j-t}>3t\ge 3.$$
Thus, we get that \eqref{finalsum} and the inequality above implies $|\mathcal I_i|\le \sum_{t=2}^k f(t)< f(1)\cdot\sum_{t=1}^{\infty} (1/3)^t= f(1)\cdot \frac {1/3}{1-1/3} =\frac 12f(1)=\frac l2 {n-j\choose k-1}$. On the other hand, ${n-s+l\choose k}-{n-s+1\choose k}-\frac {k^3}{n^3}{n\choose k}\ge (l-1){n-j\choose k-1}$. Since $l\ge 2$, the last expression is an upper bound on $|\mathcal I_i|$,  which concludes the proof of \eqref{eqkeystuff}.
\end{proof}

The second step is to show that the families $\ff_i$ must satisfy $\mathcal J_i\setminus \ff_i=\emptyset$ for each $i\in[s]$.
This part of the argument is similar to the proofs from \cite{DF, KLchv}.

Assume that, among $i\in [s]$, the density $\alpha_i:=\ff_i(\emptyset,[s-1])/{n-s+1\choose k}$ is the largest for $i=1$ and put $\beta_i^l:=\ff_i(\{l\},[s-1])/{n-s+1\choose k-1}$ for each $i=2,\ldots, s$, $l\in [s-1]$. For a family $\mathcal G\subset{[m]\choose k}$ and $t\ge k$, let $\bar\partial^t\mathcal G$ be the collection of all sets in ${[m]\choose t}$ that contain at least one set from $\mathcal G$.  The following analytic corollary of the Kruskal--Katona \cite{Kr,Ka} theorem was proved by Bollob\'as and Thomason \cite{BT}:
$$\Big|\bar\partial^t\mathcal G/{n\choose t}\Big|^{n-k}\ge \Big|\mathcal G/{n\choose k}\Big|^{n-t}.$$
From here, we conclude that $\alpha'_1:=|\bar\partial^{(n+k)/2} \ff_1(\emptyset, [s-1])|\ge \alpha_1^{1/2}$.

At the same time, for any bijection $\pi:[s-1]\to [2,s]$, the families $\ff_{\pi(i)}(\{i\},[s-1]),$ $i\in[s-1]$, and $\bar\partial^{(n+k)/2}\ff_1(\emptyset,[s-1])$ are cross-dependent. Since $(n+k)/2+(s-1)k<n$, we may take a random ordered matching consisting of $s-1$ sets  $M_1,\ldots, M_{s-1}$ of size $k$ and one set $M_s$ of size $(n+k)/2$. In any such matching, there are at most $s-1$ indices $i\in [s]$ such that:  $M_i\in \ff_{\pi(i)}(\{i\},[s-1])$  for $i\in[s-1]$, or   $M_i\in \bar\partial^{(n+k)/2}\ff_1(\emptyset,[s-1])$ for $i=s$. Computing the expectation of the number of such indices, we get
$$\alpha'_1+\sum_{i=1}^{s-1}\beta_{\pi(i)}^i\le s-1.$$
Therefore, there exists $i\in [2,s]$, such that $\sum_{l=1}^{s-1}\beta_i^l\le s-1-\alpha'_1$. This implies that
$$|\ff_i|\le {n\choose k}-{n-s+1\choose k}+\frac {k^3}{n^3}{n\choose k}-\frac {k^{3/2}}{n^{3/2}}{n-s+1\choose k-1}<{n\choose k}-{n-s+1\choose k},$$
a contradiction. Here the last inequality is again due to our choice of $n$, that is, $n>12ks\log(e^2s)$.
\end{proof}

Thanks to the Kruskal--Katona theorem, we can derive the following non-uniform generalization of Theorem~\ref{thmcrosscross}.

\begin{thm}
  Fix integers $k_1,\ldots, k_s$ and assume that $n\ge 12ks\log(e^2s)$ for $k:=\max_{i\in[s]}k_i$. Assume that $\ff_1\subset {[n]\choose k_1},\ldots, \ff_s\subset {[n]\choose k_s}$ are cross-dependent. Then there exists $i\in [s]$ such that
  $$|\ff_i|\le {n\choose k_i}-{n-s+1\choose k_i}.$$
\end{thm}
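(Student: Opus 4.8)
The plan is to reduce the non-uniform statement to the uniform case, Theorem~\ref{thmcrosscross}, by passing to up-shadows, and then to transfer the resulting bound back down to the original uniformities using the Kruskal--Katona theorem. Writing $k=\max_{i\in[s]}k_i$, for each $i\in[s]$ I would set $\G_i:=\bar\partial^k\ff_i=\big\{G\in\binom{[n]}{k}:G\supseteq F\text{ for some }F\in\ff_i\big\}$ (so that $\G_i=\ff_i$ whenever $k_i=k$). These families are $k$-uniform, and they stay cross-dependent: if $G_1\in\G_1,\dots,G_s\in\G_s$ were pairwise disjoint, then picking $F_i\in\ff_i$ with $F_i\subseteq G_i$ would give pairwise disjoint $F_i\in\ff_i$, contradicting cross-dependence of the $\ff_i$. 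Since $n\ge 12ks\log(e^2s)$, Theorem~\ref{thmcrosscross} then applies to $\G_1,\dots,\G_s$ and produces an index $i\in[s]$ with $|\G_i|\le\binom{n}{k}-\binom{n-s+1}{k}$.

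Fixing this $i$, the next step is to work with the complement $\mathcal D:=\binom{[n]}{k}\setminus\G_i$, which by the above has $|\mathcal D|\ge\binom{n-s+1}{k}$. By construction $\mathcal D$ consists exactly of the $k$-subsets of $[n]$ containing no member of $\ff_i$, so every $k_i$-element subset of a member of $\mathcal D$ avoids $\ff_i$; hence the iterated lower shadow $\partial^{k-k_i}\mathcal D$ (the family of $k_i$-sets lying inside some member of $\mathcal D$) is contained in $\binom{[n]}{k_i}\setminus\ff_i$. Here I would invoke the Kruskal--Katona theorem in Lov\'asz's form: if $|\mathcal D|\ge\binom{x}{k}$ for a real $x\ge k$, then $|\partial^{k-k_i}\mathcal D|\ge\binom{x}{k_i}$. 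Taking $x=n-s+1$ (and noting $n-s+1\ge k$ for the $n$ in the hypothesis) gives $\big|\binom{[n]}{k_i}\setminus\ff_i\big|\ge\binom{n-s+1}{k_i}$, that is, $|\ff_i|\le\binom{n}{k_i}-\binom{n-s+1}{k_i}$, as desired.

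I do not expect a real obstacle, but two points deserve attention. The key idea is that Kruskal--Katona must be applied to the \emph{complement} $\mathcal D$ --- whose size is bounded from below --- rather than directly to $\ff_i$ or $\G_i$: applying the Bollob\'as--Thomason up-shadow inequality straight to $\ff_i$ produces an exponent pointing the wrong way and loses the sharp constant, whereas passing to the complement turns the problem into a clean lower-shadow estimate. One should also confirm that the extremal configuration for this shadow estimate is $\binom{[n-s+1]}{k}$, whose $(k-k_i)$-shadow is $\binom{[n-s+1]}{k_i}$ of size $\binom{n-s+1}{k_i}$ --- which is exactly the family realising equality in the uniform theorem, a reassuring consistency check. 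Finally, shifting is not needed in this reduction: it has already been used inside the proof of Theorem~\ref{thmcrosscross}.
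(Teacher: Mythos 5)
Your proof is correct and follows essentially the same strategy as the paper: pass to the up-shadows $\bar\partial^k\ff_i$, note they remain cross-dependent, apply Theorem~\ref{thmcrosscross}, and transfer the bound back down via Kruskal--Katona. The only cosmetic difference is in the organization of the Kruskal--Katona step: the paper argues by contradiction, assuming every $|\ff_i|$ is large and applying the exact (cascade) form of Kruskal--Katona to the complemented families $\G_i=\{[n]\setminus F:F\in\ff_i\}$ to conclude $|\partial^{n-k}\G_i|=|\bar\partial^k\ff_i|>{n\choose k}-{n-s+1\choose k}$ for all $i$, whereas you argue directly, applying the uniform theorem first and then invoking Lov\'asz's form of Kruskal--Katona on the complement $\mathcal D={[n]\choose k}\setminus\bar\partial^k\ff_i$ of the small up-shadow; these are dual presentations of the same shadow computation.
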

\begin{proof}
  Suppose that $|\ff_i|>{n\choose k_i}-{n-s+1\choose k_i}$ for every $i$. Then the families $\bar \partial^k\ff_1,\ldots, \bar \partial^k\ff_s$ are cross-dependent as well. Moreover, we claim that $$|\bar \partial^k\ff_i|>{n\choose k}-{n-s+1\choose k}$$ for every $i\in[s]$. This would contradict Theorem~\ref{thmcrosscross}, so we only need to verify the displayed inequality.

  Set $\G_i:=\{[n]\setminus F:F\in \ff_i\}$.
  $$|\G_i|>{n-1\choose k_i-1}+\ldots+{n-s+1\choose k_i-1} = {n-1\choose n-k_i}+\ldots+{n-s+1\choose n-k_i-s+1}.$$
  By the Kruskal--Katona theorem \cite{Kr,Ka}, its shadow on level $n-k$ will be
  $$|\partial ^{n-k}\G_i|>{n-1\choose n-k}+\ldots+{n-s+1\choose n-k-s+1} = {n\choose k}-{n-s+1\choose k}.$$
  But $|\partial ^{n-k}\G_i| = |\bar \partial^k\ff_i|.$
\end{proof}

\section{Concluding Remarks}
We believe that Theorem~\ref{thmcrosscross} is only one example of the many possible scenarios where the shifted juntas enable one to get better, concrete bounds which seem to be elusive for the general juntas obtained by means of discrete Fourier transform methods.

We note that the validity of Theorem~\ref{thmcrosscross} for $n>Csk$ with some large $C$ was announced by Keevash, Lifshitz,  Long, and Minzer  as a consequence of general sharp threshold-type results. In the proof, they use the aforementioned variant of Corollary~\ref{cor111}.\vskip+0.1cm

\section*{Acknowledgments} We thank Noam Lifshitz and the anonymous referee for many helpful remarks.

The authors' research was supported by the grant of the Russian Government N 075-15-2019-1926. The research of the second author was in part supported by the Advanced Postdoc.Mobility grant no. P300P2\_177839 of the Swiss National Science Foundation.

\bibliographystyle{amsplain}


\begin{dajauthors}
\begin{authorinfo}[peter]
  Peter Frankl\\
  R\'enyi Institute\\
  Budapest, Hungary\\
  peter.frankl\imageat{}gmail\imagedot{}com \\
  \url{https://users.renyi.hu/~pfrankl/}
\end{authorinfo}
\begin{authorinfo}[andrey]
  Andrey Kupavskii\\
  IAS, Princeton,\\
  CNRS, France,\\
  MIPT, Moscow
  kupavskii\imageat{}ya\imagedot{}ru \\
  \url{http://kupavskii.com}
\end{authorinfo}
\end{dajauthors}

\end{document}